\newcommand\blfootnote[1]{%
  \begingroup
  \renewcommand\thefootnote{}\footnote{#1}%
  \addtocounter{footnote}{-1}%
  \endgroup }
\newtheorem{defin}{}
\newtheorem{saetze}[defin]{}
\newtheorem{conjec}[defin]{}
\newtheorem{lemmas}[defin]{}
\newtheorem{folger}[defin]{}
\newtheorem{bemerk}[defin]{}
\newtheorem{prop}[defin]{}
\newenvironment{theorem}  {\begin{saetze}\it {\bf Theorem:}}{\end{saetze}}
\newenvironment{conjecture}{\begin{conjec}\it {\bf Conjecture:}}{\end{conjec}}
\newenvironment{lemma}    {\begin{lemmas}\it {\bf Lemma:}}{\end{lemmas}}
\newenvironment{remark}   {\begin{bemerk}\it {\bf Remark:}}{\end{bemerk}}
\newenvironment{proof}    {{\it Proof}:}{{\hfill \fillbox \bigskip}}
\newcommand{\fillbox}{\mbox{$\bullet$}}
\newcommand{\ra}{\rightarrow}
\newcommand{\ms}{\mapsto}
\newcommand{\ol}{\overline}
\newcommand{\N}{\mathbb N}
\newcommand{\FF}{\mathcal F}
\newcommand{\TT}{\mathcal T}
\newcommand{\RR}{\mathcal R}
\newcommand{\Z}{\mathbb Z}
\newcommand{\Q}{\mathbb Q}
\newcommand{\G}{\mathcal G}
\newcommand{\B}{\mathcal B}
\renewcommand{\S}{\mathcal S}
\renewcommand{\O}{\mathcal O}
\newcommand{\U}{\mathcal U}
\newcommand{\Hom}{\mathrm{Hom}}
\newcommand{\p}{\mathfrak p}
\newcommand{\hH}{\hat{H}}
\newcommand{\GG}{\mathbb{G}}
\renewcommand{\ll}{\lambda}
\newenvironment{items}{\begin{list}{$\alph{item})$}
{\labelwidth18pt \leftmargin18pt \topsep3pt \itemsep1pt \parsep0pt}}
{\end{list}}
\begin{document}

\title{The frame of the graph associated with \\
the $p$-groups of maximal class}
\author{Bettina Eick, Patali Komma and Subhrajyoti Saha}
\date{\today}
\maketitle

\begin{abstract}
The graph $\G(p)$ associated with the $p$-groups of maximal class is a major 
tool in their classification. We introduce a subgraph of $\G(p)$ called its 
{\em frame}. Its construction is based on the Lazard correspondence.  We 
show that every $p$-group of maximal class has a normal subgroup of order 
at most $p$ whose quotient is in the frame. Since the frame is close to 
the full graph, it offers a new approach towards the classification of these 
groups.
\blfootnote{{\bf Acknowledgement:} The authors thank Charles Leedham-Green 
and Eamonn O'Brien for discussions on this project and comments. The second 
author was supported by an Alexander von Humboldt Foundation Research 
Fellowship.}
\end{abstract}

\section{Introduction}

The classification of $p$-groups of maximal class is a long-standing 
project. It was initiated by Blackburn \cite{Bla58} who classified these 
groups for the small primes $2$ and $3$. The classification for larger
primes was investigated in many publications and is
significantly more complicated than the small prime case; we refer to 
Leedham-Green \& McKay \cite[Chap. 3]{LGM02} for background.
\smallskip

We briefly recall some key aspects of the existing knowledge. For a prime 
$p$ we visualize the $p$-groups of maximal class via their associated graph 
$\G(p)$: the vertices of $\G(p)$ correspond one-to-one to the infinitely 
many isomorphism types of $p$-groups of maximal class and there is an 
edge $G \ra H$ if $H / Z(H) \cong G$ holds. It is known that $\G(p)$ consists 
of an isolated point $C_{p^2}$ and an infinite tree $\TT$ with root $C_p^2$. 
The tree $\TT$ has a unique infinite path $S_2 \ra S_3 \ra \ldots$; this is
called the mainline of $\G(p)$. The 
branch $\B_i$ of $\TT$ is its subtree consisting of all descendants of 
$S_i$ that are not descendants of $S_{i+1}$. Thus each branch $\B_i$ is a 
finite tree with root $S_i$. If $p \leq 3$, then $\B_i$ is a tree of depth 
$1$, but this does not hold for $p \geq 5$. 
\smallskip

The graphs $\G(p)$ for $p \geq 5$ were investigated by Leedham-Green \& McKay 
\cite{LMc76, LMc78a, LMc78, LMc84}. They introduced the {\em constructible 
groups} and investigated the subtrees $\S_i$ of $\B_i$ consisting of them;
these subtrees were later called {\em skeletons}, see for example Dietrich 
\& Eick \cite{DEi17, DEi25}. The skeletons yield significant insights into 
the broad structure of the branches, but many details remain open. More
precisely, \cite[Theorem 11.3.9]{LGM02} shows that almost every finite 
$p$-group $G$ of maximal class has a normal subgroup $N$ of order dividing 
$p^{18(p-1)}$ such that $G/N$ is a skeleton group. This bound is too large 
to provide full structural insights into the branches.
\smallskip

We introduce the {\em frame groups} and investigate the subtrees $\FF_i$
of $\B_i$ consisting of them; such a subtree is called {\em frame} of
$\B_i$. The
frame contains the skeleton and is close 
to the full branch. It thus can be used to understand the details of the
branches. The frame shares many of the nice properties of the skeleton; 
for example, each group in the frame is determined by $(p-3)/2$ parameters. 
In summary, we propose to use the frame to facilitate a full classification 
of $p$-groups of maximal class.

\subsection{Construction of the frame}

Let $K = \Q_p(\theta)$, where $\Q_p$ are the $p$-adic rational numbers and 
$\theta$ is a primitive $p$-th root of unity. Let $\O$ be the maximal order 
of the field $K$ and let $\p$ be the unique maximal ideal in $\O$. For 
each $i \in \N_0$ the power $\p^i$ is the unique ideal of index $p^i$ in $\O$. 
Let $P = \langle \theta \rangle$ cyclic of order $p$ and
\[ \hH_i = \{ \gamma \in \Hom_P( \p^i \wedge \p^i, \p^{2i+1})) \mid 
              \gamma \mbox{ surjective} \}.\]
We consider $\gamma \in \hH_i$ and define the ideal $J_i(\gamma)$ of $\O$ as 
\[ J_i(\gamma) = \langle 
   \gamma( \gamma( x \wedge y ) \wedge z) + 
   \gamma( \gamma( y \wedge z ) \wedge x) + 
   \gamma( \gamma( z \wedge x ) \wedge y) \mid x, y, z \in \p^i \rangle. \]
As $J_i(\gamma)$ is an ideal in $\O$, it follows that $J_i(\gamma) = \p^\ll$ 
for some $\ll \in \N \cup \{\infty\}$, where $\p^\infty$ represents the
trivial ideal. For $m \in \N$ with $i \leq m \leq \ll$, let $L_{i,m}(\gamma)$
be the quotient $\p^i / \p^m$ equipped with the addition $(a+\p^m) + 
(b+\p^m) = (a+b) + \p^m$ and the multiplication
\[ (a+\p^m)(b+\p^m) = \gamma(a \wedge b)+ \p^m.\]
Since $\p^\ll \leq \p^m$, this multiplication satisfies the
Jacobi identity and thus $L_{i,m}(\gamma)$ is a Lie ring. If $L_{i,m}(\gamma)$
has class at most $p-1$, then the Lazard correspondence translates 
$L_{i,m}(\gamma)$ to a finite $p$-group $G_{i,m}(\gamma)$. The cyclic group 
$P$ acts by multiplication with $\theta$ on $L_{i,m}(\gamma)$ and on 
$G_{i,m}(\gamma)$. We define
\[ S_{i,m}(\gamma) = G_{i,m}(\gamma) \rtimes P.\]

In Section \ref{frameconst} we show that each $S_{i,m}(\gamma)$ is a finite 
$p$-group of maximal class. If $m \leq 2i+1$, then $S_{i,m}(\gamma)$ 
corresponds to a vertex on the mainline of $\G(p)$, otherwise 
$S_{i,m}(\gamma)$ corresponds to a vertex in $\B_{i+2}$. The set of 
vertices in $\B_{i+2}$ obtained in this way determines a subtree 
$\FF_{i+2}$ of $\B_{i+2}$ which we call the {\em frame}.
\medskip

Figure \ref{fig9} summarizes the setup. The graph $\G(p)$ consists of an
isolated vertex $C_{p^2}$, the mainline and its branches. The mainline is 
an infinite path that connects the branches $\B_2, \B_3, \ldots$. Each 
branch $\B_i$ contains its frame $\FF_i$ and each frame contains the 
skeleton of $\B_i$. Both, $\B_i$ and $\FF_i$ are trees with 
root $S_i$ of order $p^i$.

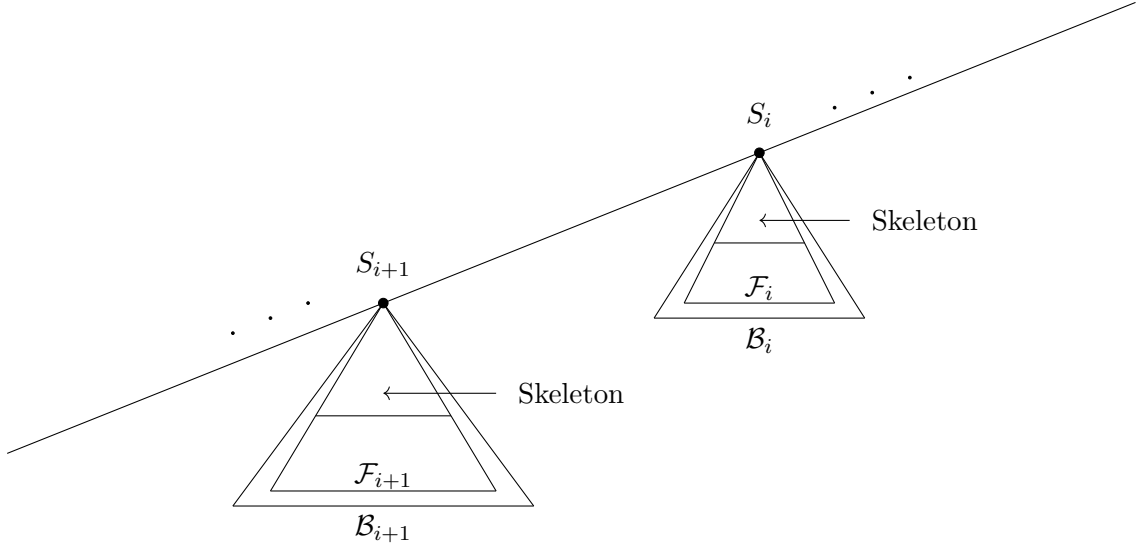
\begin{figure}
\centering
\begin{tikzpicture}
\draw[black] (15,0) -- (0,-6);
\filldraw[black] (10,-2) circle (1.8pt);
\filldraw[black] (5,-4) circle (1.8pt);
\node at (10,-1.5) {$S_i$};
\node at (5,-3.5) {$S_{i+1}$};
\node at (10,-4.5) {$\B_i$};
\node at (5,-7) {$\B_{i+1}$};
\filldraw[black] (12,-1) circle (0.5pt);
\filldraw[black] (11.5,-1.2) circle (0.5pt);
\filldraw[black] (11,-1.4) circle (0.5pt);
\filldraw[black] (4,-4) circle (0.5pt);
\filldraw[black] (3.5,-4.2) circle (0.5pt);
\filldraw[black] (3,-4.4) circle (0.5pt);
\draw[black] (10,-2) -- (9,-4);
\draw[black] (10,-2) -- (11,-4);
\draw[black] (9,-4) -- (11,-4);
\draw[black] (10,-2) -- (8.6,-4.2);
\draw[black] (10,-2) -- (11.4,-4.2);
\draw[black] (8.6,-4.2) -- (11.4,-4.2);
\draw[black] (9.4,-3.2) -- (10.6,-3.2);
\draw[black] (5,-4) -- (3.5,-6.5);
\draw[black] (5,-4) -- (6.5,-6.5);
\draw[black] (3.5,-6.5) -- (6.5,-6.5);
\draw[black] (5,-4) -- (3,-6.7);
\draw[black] (5,-4) -- (7,-6.7);
\draw[black] (3,-6.7) -- (7,-6.7);
\draw[black] (4.1,-5.5) -- (5.9,-5.5);
\draw[->] (6.5,-5.2)-- (5.0,-5.2);
\node at (7.5,-5.2) {$\text{Skeleton}$};
\draw[->] (11.2,-2.9)-- (10.0,-2.9);
\node at (12.2,-2.9) {$\text{Skeleton}$};
\node at (10,-3.8) {$\FF_i$};
\node at (5,-6.3) {$\FF_{i+1}$};
\end{tikzpicture}
\caption{{\small Branch $\B_i$ with root $S_i$ of order $p^i$,
frame $\FF_i$ and its skeleton. Similar for branch $\B_{i+1}$. 
All branches are connected by the mainline of $\G(p)$.}}
\label{fig9}
\end{figure}

\subsection{Main results}
\label{res}

For $i \in \N_0$ and $\gamma \in \hH_i$, let $L_i(\gamma) = L_{i,\ll}(\gamma)$
the maximal Lie ring obtained for $i$ and $\gamma$; this corresponds to 
$\p^i / J_i(\gamma)$. Our first main result investigates the nilpotency 
and class of $L_i(\gamma)$ provided that it is finite. Theorem 
\ref{classbound} implies the following result; it relies on 
Theorem \ref{lcs} and detailed calculations of the degree of commutativity 
for $p$-groups of maximal class.

\begin{theorem}
\label{main0}
Let $p \geq 5$ prime and $\gamma \in \hH_i$ with $J_i(\gamma) \neq \{0\}$. 
\begin{items}
\item[\rm (a)]
If $i > p-2$, then $L_i(\gamma)$ is nilpotent. 
\item[\rm (b)]
If $i > p-1$, then $L_i(\gamma)$ has class at most $p-1$.
\item[\rm (c)]
If $i > 3p - 10$, then $L_i(\gamma)$ has class $3$.
\end{items}
\end{theorem}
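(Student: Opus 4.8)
The plan is to reduce all three statements to locating the single integer $\ll$ (where $J_i(\gamma)=\p^\ll$) relative to the lower central series of $L_i(\gamma)$. First I would pin down that series. Writing $L=L_i(\gamma)=\p^i/\p^\ll$ with bracket $[a,b]=\gamma(a\wedge b)$, surjectivity of $\gamma$ gives at once $[L,L]=\p^{2i+1}/\p^\ll$. Setting $\pi=\theta-1$, a uniformizer of $\O$ so that $\p^j=\pi^j\O$, the $P$-equivariance of $\gamma$ becomes the functional equation $\gamma(\pi x\wedge y)+\gamma(x\wedge\pi y)+\gamma(\pi x\wedge\pi y)=\pi\,\gamma(x\wedge y)$, and iterating it controls how far each bracket raises the $\p$-adic filtration. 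The outcome I expect, and would record as Theorem \ref{lcs}, is that every lower central term is a full ideal, $\gamma_k(L)=\p^{c_k}/\p^\ll$ for an explicit strictly increasing sequence $c_1=i<c_2=2i+1<c_3<\cdots$ depending only on $i$ and $p$ (here $\gamma_k(L)$ denotes the $k$-th term of the lower central series). The nilpotency class of $L$ is then $\max\{k:c_k<\ll\}$, i.e. the largest $k$ with $\p^{c_k}\not\subseteq J_i(\gamma)$.

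With this in hand, part (a) is nearly formal. Since $J_i(\gamma)\neq\{0\}$ by hypothesis, $\ll<\infty$, so $L$ is a finite Lie ring of order $p^{\ll-i}$. If the $c_k$ are strictly increasing, then $c_k\geq\ll$ for some $k$ and the lower central series reaches $0$, giving nilpotency. Thus (a) reduces to showing $c_{k+1}>c_k$ at every stage, i.e. that the bracket strictly raises the filtration each step; this is exactly the assertion that the associated maximal-class group $S_{i,m}(\gamma)$ has positive degree of commutativity, and the computation above shows this holds once $i>p-2$. For smaller $i$ the functional equation can allow the filtration to stall, and $L$ need not be nilpotent, which is what makes the threshold sharp.

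Parts (b) and (c) amount to sandwiching $\ll$ between consecutive $c_k$. Each Jacobiator $\gamma(\gamma(x\wedge y)\wedge z)+\gamma(\gamma(y\wedge z)\wedge x)+\gamma(\gamma(z\wedge x)\wedge y)$ is a sum of three elements of $\gamma(\p^{2i+1}\wedge\p^i)$, whose common level is $\p^{c_3}$, so trivially $\ll\geq c_3$; the real content is the reverse inequality, that the Jacobiators cannot vanish to arbitrarily high order. Concretely I would bound $\ll$ from above by exhibiting, for suitable $x,y,z$, a Jacobiator of controlled valuation, which is governed by the same off-diagonal data $d(a,b)=v(\gamma(\p^a\wedge\p^b))$ that determines the $c_k$. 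Granting the resulting bounds, class $=\max\{k:c_k<\ll\}\leq p-1$ follows once $\ll\leq c_p$, which the bound yields for $i>p-1$ (part b), while class $=3$ follows from $c_3<\ll\leq c_4$, which the bound yields for $i>3p-10$ (part c). Both thresholds are precisely the points at which the explicit expressions for $c_k(i,p)$ and the upper bound on $\ll$ cross.

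The main obstacle is the precise evaluation of the off-diagonal valuations $d(a,b)$, equivalently the exact location of $\ll$. The diagonal value $d(i,i)=2i+1$ is given, but the functional equation relates $d(a,b)$ only to $d(a-1,b)$, $d(a,b+1)$ and $d(a,b)+1$, terms of the same total degree, so a naive induction on $a+b$ fails and the off-diagonal behaviour genuinely depends on the $(p-3)/2$ parameters describing $\gamma$. Extracting bounds on the Jacobiator valuation that are uniform in these parameters is the delicate, calculation-heavy step; I would organize it by transporting the question to the maximal-class group $S_{i,m}(\gamma)$ and invoking the Leedham-Green--McKay degree-of-commutativity estimates, which is exactly the route signalled by the reference to Theorem \ref{classbound}.
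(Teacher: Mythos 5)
Your setup matches the paper's: you introduce the filtration $w_1=i$, $w_2=2i+1$, $\p^{w_{k+1}}=\gamma(\p^{w_k}\wedge\p^i)$, identify the lower central series of $L_i(\gamma)$ with the quotients $\p^{w_k}/\p^\ll$, get strict growth $w_{k+1}\geq w_k+i-(p-2)$ from the $P$-equivariance (this is Lemma \ref{gammimgs}), and so obtain (a) exactly as the paper does. You also correctly reduce (b) and (c) to an upper bound on $\ll$ relative to $w_3$ and correctly guess that the bound should come from the Leedham-Green--McKay degree-of-commutativity estimates. Two corrections, though: the levels $w_k$ for $k\geq 3$ depend on $\gamma$, not only on $i$ and $p$ (you say both things at different points), and (c) as you argue it gives class \emph{at most} $3$, not exactly $3$, unless you separately show $w_3<\ll$.

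The genuine gap is in the step you yourself flag as the obstacle: you propose to bound $\ll$ from above either by exhibiting a Jacobiator of controlled valuation (which the paper never does, and which you admit depends delicately on the $(p-3)/2$ parameters) or by applying the degree-of-commutativity estimates to $S_{i,m}(\gamma)$ --- but the latter is circular as stated, since $S_{i,m}(\gamma)$ is only defined via the Lazard correspondence once $L_{i,m}(\gamma)$ is known to have class at most $p-1$, which is what (b) asserts. The paper's key device, absent from your proposal, is the periodicity of the whole construction under $i\mapsto i+(p-1)$ (multiplication by $p$): under this shift $w_k$ increases by $k(p-1)$ while $\ll$ increases by $3(p-1)$, because the Jacobiator is trilinear. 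Hence $w_4$ eventually overtakes $\ll$, so for all sufficiently large $j\equiv i\bmod(p-1)$ the ring $L_j(\gamma)$ has class exactly $3$ (Theorem \ref{lcs}(c)); for such $j$ the group $S_{j,\ll(j)}(\gamma)$ exists and \cite[Cor.\ 3.4.12]{LGM02} gives $\ll(j)-w_3(j)\leq 2p-8$; and since $\ll-w_3$ is invariant under the shift, the same bound holds at the original $i$. Combined with $w_{k+1}-w_k\geq i-(p-2)$ this yields the class bound $3+(2p-8)/(i-(p-2))$ of Theorem \ref{classbound}, from which (b) and (c) follow by plugging in $i>p-1$ and $i>3p-10$. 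Without the shift-invariance of $\ll-w_3$ your sandwich argument has no non-circular source for the upper bound on $\ll$.
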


The following theorem, proved in Section \ref{frameproof}, shows that the
frame yields significant insight into many details of the branches. Note 
that a non-abelian $p$-group $G$ of maximal class satisfies $|Z(G)| = p$. 

\begin{theorem}
\label{main1}
Let $p \geq 5$ prime and let $i > p+1$. If $G$ is a group
in the branch $\B_i$ of $\G(p)$, then $G/Z(G)$ is in the frame $\FF_i$.
\end{theorem}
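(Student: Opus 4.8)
The plan is to invert the frame construction. Assuming, as we may, that $G$ is a proper descendant of $S_i$, the quotient $H := G/Z(G)$ is again a vertex of $\B_i$ (it descends from $S_i$ but, as an ancestor of $G$, not from $S_{i+1}$), so it suffices to exhibit $H$ as a frame vertex. I will reconstruct a pair $(\gamma,m)$ with $\gamma \in \hH_{i-2}$ and prove $H \cong S_{i-2,m}(\gamma)$. Note that $i > p+1$ is precisely $i-2 > p-1$, the hypothesis under which Theorem \ref{main0}(b) makes the frame Lie rings of $\B_i$ have class at most $p-1$; this is what keeps the Lazard correspondence available throughout the reconstruction.

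First I would fix the maximal-class data of $H$. Let $H_1$ be the distinguished maximal subgroup and let $\bar\theta$ generate $H/H_1 \cong C_p$. By Theorem \ref{lcs} the uniserial action of $\bar\theta$ on the lower central factors lets one identify the filtration $(\gamma_k(H))$ with a $\p$-adic filtration: $\bar\theta-1$ acts as a uniformizer, the abelian sections become $\O$-modules, and $H_1$ becomes, as an $\O$-module, a quotient $\p^{i-2}/\p^m$ with $m$ determined by $|H|=p^{m-i+3}$. I would also realise the action through an order-$p$ complement, $H=H_1\rtimes P$; for the groups in question such a complement exists, by the same analysis that produces elements of order $p$ outside $H_1$. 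Finally, the degree-of-commutativity estimates behind Theorem \ref{classbound} bound the nilpotency class of $H_1$ by $p-1$, so the inverse Lazard correspondence yields an $\O$-Lie ring $L=\log(H_1)$ supported on $\p^{i-2}/\p^m$.

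Next I would identify the bracket. The Lie multiplication of $L$ is alternating and commutes with the $P$-action, hence is induced by a homomorphism $\gamma\colon\p^{i-2}\wedge\p^{i-2}\to\p^{2(i-2)+1}$; the decisive point is that these commutators land in $\p^{2(i-2)+1}$ and exhaust it, so that $\gamma$ is surjective and lies in $\hH_{i-2}$. Because $L$ is a genuine Lie ring, the Jacobi identity places the generators of $J_{i-2}(\gamma)$ inside $\p^m$, i.e.\ $m\le\ll$, whence $L=L_{i-2,m}(\gamma)$. Reattaching $P$ gives $H\cong G_{i-2,m}(\gamma)\rtimes P=S_{i-2,m}(\gamma)$, which by the construction of Section \ref{frameconst} is a vertex of $\FF_i$.

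The main obstacle is the homogeneity of the bracket in the previous step: proving that the commutator map is captured by a single $P$-equivariant $\gamma$ with target \emph{exactly} $\p^{2(i-2)+1}$, neither shallower nor deeper. This is equivalent to pinning down the degree of commutativity of $H$, and it is the technical heart of the argument; once it is known, the module structure, the class bound, and the ideal condition all follow. It is here, too, that passing to $G/Z(G)$ is essential: the top central layer is the one place where the filtration and the commutator data can fail to be rigid, and excising it removes the ambiguity, so that the reconstructed $\gamma$ is well defined and $H$ is a genuine frame vertex rather than merely a child of one.
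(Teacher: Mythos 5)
Your outline follows the same route as the paper---pass to $H=G/Z(G)$, split off a complement of order $p$, apply the inverse Lazard correspondence to the distinguished maximal subgroup, and read off $\gamma$ from the Lie bracket---but the step you compress into the word ``hence'' (``the Lie multiplication of $L$ is alternating and commutes with the $P$-action, hence is induced by a homomorphism $\gamma\colon \p^{i-2}\wedge\p^{i-2}\to\p^{2(i-2)+1}$'') is exactly where the theorem is hard, and your diagnosis of the difficulty is off target. What the reconstruction hands you is only an element of $\Hom_P(\p^{i-2}/\p^{m}\wedge \p^{i-2}/\p^{m},\,\p^{2i-3}/\p^{m})$; to land in the frame you must lift it to a genuine $\gamma\in\Hom_P(\p^{i-2}\wedge\p^{i-2},\p^{2i-3})$. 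That lift is obstructed: by \cite[Theorem 8.3.7]{LGM02} the natural map from $\Hom_P(\p^{i-2}\wedge\p^{i-2},\p^{2i-3})$ to $\Hom_P(\p^{i-2}\wedge\p^{i-2},\p^{2i-3}/\p^{m})$ has image of index $p$, supplemented by $\Hom_P(\p^{i-2}\wedge\p^{i-2},\p^{m-1}/\p^{m})$. The entire content of the paper's Lemma \ref{multi} is to write the bracket as $\gamma_I+\gamma_N$ with $\gamma_I$ liftable and $\gamma_N$ supported on the top layer $\p^{m-1}/\p^{m}$, which corresponds to $Z(G)$ and therefore dies after the quotient. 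The ``homogeneity''/degree-of-commutativity issue you single out as the main obstacle is comparatively routine: positivity of the degree of commutativity is \cite[Theorem 3.3.5]{LGM02}, and the target and surjectivity of the bracket follow from identifying $M'$ with the kernel of the largest mainline quotient.

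There is also a structural problem with where you apply your own fix. Because you run the reconstruction on $H=G/Z(G)$ itself, the non-liftable part of the bracket of $L(H_1)$ sits in the top layer of $H$'s filtration, i.e.\ in $Z(H)$---which you have not quotiented out. Taken literally, your argument would only show that $H/Z(H)=G/Z_2(G)$ is a frame group. The repair is to start from $G$: reconstruct the (possibly non-liftable) bracket of $L(M)$ for the two-step centralizer $M$ of $G$, invoke the supplement decomposition above, and only then observe that reduction modulo $Z(G)$ kills the obstruction, as in Lemma \ref{multi}. Two smaller inaccuracies: the uniseriality of the action and the identification of the additive group with $\p^{i-2}/\p^{m}$ come from \cite[Prop.\ 3.3.8]{LGM02} rather than from Theorem \ref{lcs}, and the class bound on the maximal subgroup needed for Lazard is Shepherd's $cl(M)\le(p+1)/2<p$, not a consequence of Theorem \ref{classbound}.
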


Leedham-Green \& McKay \cite{LMc84} proved that each $\gamma \in \hH_i$
can be defined by $(p-3)/2$ parameters. We recall this briefly. 
For $j \in \Z$ let $\sigma_j$ be the Galois automorphism of $K$ mapping 
$\theta \ms \theta^j$. For $a \in \{2, \ldots, (p-1)/2\}$ define 
\[ \vartheta_a : K \wedge K \ra K : 
          x \wedge y \ms \sigma_a(x) \sigma_{1-a}(y) - 
           \sigma_{1-a}(x) \sigma_a(y).\]
Lemma \ref{imgs} asserts that $\vartheta_a( \p^i \wedge \p^i ) = \p^{2i+1}$ 
and thus $\vartheta_a \in \hH_i$ for each $a$ and each $i \in \N_0$. 
Conversely, for each $\gamma \in \hH_i$ there exist coefficients $c_2, 
\ldots, c_{(p-1)/2} \in K$ with $\gamma = \sum_a c_a \vartheta_a$. 
Translated to our setting, each frame group can be defined by $(p-3)/2$ 
parameters. 

Finally, we briefly consider the isomorphism problem for frame groups, see 
Section \ref{frameisom}. 

\begin{theorem}
\label{main2}
Let $p \geq 5$ prime, let $i > p+1$ and let $\gamma, \gamma' \in \hH_i$.
If $S_{i,m}(\gamma) \cong S_{i,m}(\gamma')$, then $L_{i,m}(\gamma) \cong 
L_{i,m}(\gamma')$ via an isomorphism that is compatible with multiplication 
by $\theta$.
\end{theorem}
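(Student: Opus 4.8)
The plan is to push the group isomorphism through the Lazard correspondence to the level of Lie rings, and then to repair the resulting Lie isomorphism so that it commutes with multiplication by $\theta$.

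First I would argue that $G := G_{i,m}(\gamma)$ is characteristic in $S := S_{i,m}(\gamma)$, and similarly for the primed objects. Because $\theta - 1$ is a uniformizer of $\O$, multiplication by $\theta$ shifts the ideal filtration $\p^i \supseteq \p^{i+1} \supseteq \cdots$ by one step; translated through Lazard this says that $\theta$ acts uniserially on $G$. On the other hand the bracket $\gamma$ raises the filtration degree, so every element of $G$ centralises the section $\gamma_2(S)/\gamma_4(S)$, whereas $\theta$ does not. Hence $G$ is exactly the two-step centraliser $C_S(\gamma_2(S)/\gamma_4(S))$, which is characteristic as the centraliser of a characteristic section (for $i > p+1$ the class of $S$ is large enough that $\gamma_4(S) \subsetneq \gamma_2(S)$). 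Consequently any isomorphism $\phi \colon S \to S'$ restricts to an isomorphism $G \to G'$, and functoriality of the Lazard correspondence converts it into a Lie ring isomorphism $\ell_0 \colon L_{i,m}(\gamma) \to L_{i,m}(\gamma')$.

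Next I would compare the two actions of $\theta$. Write $\tau, \tau'$ for multiplication by $\theta$ on $L := L_{i,m}(\gamma)$ and $L' := L_{i,m}(\gamma')$. Since $\theta \notin G$, we have $\phi(\theta) \notin G'$, and conjugation by $\phi(\theta)$ induces a Lie automorphism $\beta$ of $L'$ satisfying $\ell_0 \tau \ell_0^{-1} = \beta$. As $\phi(\theta) \in \theta^k G'$ for some $k \in \{1, \dots, p-1\}$ and conjugation by $\theta^k$ on $G'$ corresponds to $\tau'^{\,k}$, we get $\beta = c \circ \tau'^{\,k}$ with $c$ an inner Lie automorphism of $L'$.

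Two corrections then remain. To remove $c$, I would replace $\ell_0$ by $c_{v'} \circ \ell_0$ and solve $c_{v'}\, c\, \tau'^{\,k} c_{v'}^{-1} = \tau'^{\,k}$ for $v'$; via the Baker--Campbell--Hausdorff formula this reduces to $(\tau'^{\,k} - 1)(v') \equiv u'$ modulo higher filtration, which is solvable by successive approximation because $\tau'^{\,k} - 1$ equals a unit times $\tau' - 1$ and hence again shifts the filtration by exactly one step. The main obstacle is the residual power $k$. Multiplication by $\theta^k$ differs from multiplication by $\theta$ by the Galois automorphism $\sigma_k$, and composing with the $\sigma_k$-semilinear map restores $\tau'$ at the cost of replacing the coefficients $c_a$ of $\gamma' = \sum_a c_a \vartheta_a$ by $\sigma_k(c_a)$; that is, the construction so far only yields a $\theta$-compatible isomorphism onto the Galois twist of $L'$. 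The heart of the proof is therefore to eliminate this twist, equivalently to show that $k$ may be taken to be $1$. I would attempt this by first conjugating $\phi$ so that $\phi(\theta) \in P$ --- the complements of $G'$ being controlled by $H^1(P, G')$, whose size is pinned down by the uniserial action --- and then by comparing the uniserial scalars that $\theta$ and $\phi(\theta)$ induce on the characteristic sections $\gamma_j(S')/\gamma_{j+1}(S')$. I expect this normalisation of $k$, where the Galois action on the $(p-3)/2$ parameters enters, to be the delicate point.
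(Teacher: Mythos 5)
Your first half is precisely the paper's own argument for this theorem (Theorem \ref{isom}): identify $G_{i,m}(\gamma)$ with the two-step centralizer $C_S(\gamma_2(S)/\gamma_4(S))$, conclude that it is characteristic, restrict the group isomorphism to it, and push the restriction through the Lazard correspondence to obtain a Lie ring isomorphism $\ell_0 \colon L_{i,m}(\gamma) \to L_{i,m}(\gamma')$. Your justification that $G$ is the two-step centralizer (the bracket lands in $\p^{2i+1} \subseteq \p^{i+3}$ while $\theta$ shifts the filtration by exactly one) is correct and is more detail than the paper gives. Where you diverge is on the $\theta$-compatibility: the paper's proof simply asserts it in one clause, whereas you correctly observe that a priori $\ell_0$ only intertwines $\tau$ with $c_{g'} \circ \tau'^{\,k}$, where $\phi(\theta) = \theta^k g'$.

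The gap in your proposal is the final normalization $k \mapsto 1$, which you flag as the delicate point and do not carry out. You should be aware that this step is not just delicate but unachievable in general, so you are chasing a stronger statement than the one intended. Indeed, by Theorem \ref{conjZ} a Galois twist $\gamma' = \sum_a \sigma_j(c_a)\vartheta_a$ of $\gamma = \sum_a c_a \vartheta_a$ yields $S_{i,m}(\gamma) \cong S_{i,m}(\gamma')$ via an isomorphism sending $\theta$ to $\theta^j$, and the induced Lie isomorphism $a + \p^m \mapsto \sigma_j(a) + \p^m$ is only $\sigma_j$-semilinear: it intertwines $\tau$ with $\tau'^{\,j}$, and there is no reason a strictly equivariant isomorphism should exist. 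The phrase ``compatible with multiplication by $\theta$'' must therefore be read in the weaker sense of normalizing the $\theta$-action, i.e.\ intertwining $\tau$ with $c \circ \tau'^{\,k}$ --- which is exactly what your second paragraph already establishes, so under the intended reading your proof is essentially complete before the last paragraph begins. A smaller technical caveat on the step you do attempt: your successive-approximation removal of the inner part solves $(\tau'^{\,k}-1)(v') \equiv -g'$, but $\tau'^{\,k}-1$ maps $\p^i/\p^m$ onto $\p^{i+1}/\p^m$ only, so the component of $g'$ in $\p^i/\p^{i+1}$ cannot be absorbed; this is consistent with $H^1(P, \p^i/\p^m) \cong \p^i/\p^{i+1}$ being nontrivial, so the complements of $G'$ in $S'$ are not all conjugate and the inner factor $c_{g'}$ cannot in general be eliminated either. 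Neither residue matters for the weaker (correct) form of the conclusion.
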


\section{Preliminaries from number theory}
\label{numbth}

We recall some results from number theory. Many 
of these results are used in the construction of skeleton groups and hence 
are well known. Let $\theta$ denote a $p$-th root of unity over the 
$p$-adic rationals $\Q_p$ and let $K = \Q_p(\theta)$ be the associated 
number field. Let $\O$ be the maximal order in $K$ and $\O = \p^0 > \p^1 
> \ldots$ the unique series of ideals in $\O$ with $[\O : \p^i] = p^i$. Let 
$\kappa = \theta - 1$ and observe that $\p^i$ is generated as an ideal by 
$\kappa^i$. For $j \in \Z$ the map $\sigma_j : \theta \ms \theta^j$ induces 
a Galois automorphism of $K$.

\begin{remark}
Let $P = \langle \theta \rangle$ be the cyclic group of order $p$ and let
$i \in \N_0$. The split extension $\p^i \rtimes P$ is isomorphic to 
the (unique) infinite pro-$p$-group of maximal class.
\end{remark}

Let $i, j \in \N_0$. Then $\p^i$ is an $\O$-module under multiplication.
The wedge product (or exterior square) $\p^i \wedge \p^i$ is an 
$\O$-module under diagonal action and
contains $\p^i \wedge \p^j$ for $j \geq i$. We define 
\[ H = \Hom_P( \O \wedge \O, \O) \;\; \mbox{ and } \;\;
   H_i = \Hom_P( \p^i \wedge \p^i, \O).\]
Now $\gamma \in H$ induces an element in $H_i$ by restriction; we
denote the restricted element also by $\gamma$. The set $\hH_i$ of 
homomorphisms mapping onto $\p^{2i+1}$ is a subset of $H_i$. 
Multiplication by $p$ yields an isomorphism $\p^i \ra \p^{i+(p-1)} : 
x \ms px$. This induces a bijection $\hH_i \ra \hH_{i+(p-1)}$ whose 
inverse is obtained by division with $p$. 

\begin{lemma}
\label{multp}
Let $i \in \N_0$ and $\gamma \in \hH_i$. Then $\gamma \in \hH_j$ for each
$j \in \N_0$ with $i \equiv j \bmod (p-1)$.
\end{lemma}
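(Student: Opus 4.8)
The plan is to reduce the statement to the single step $j = i+(p-1)$ and then iterate. Any $j \in \N_0$ with $j \equiv i \bmod (p-1)$ can be reached from $i$ by repeatedly adding or subtracting $p-1$ while staying in $\N_0$, so it suffices to establish the equivalence $\gamma \in \hH_i \Leftrightarrow \gamma \in \hH_{i+(p-1)}$ for every $i \in \N_0$. I would prove the full equivalence (not just one direction), because the downward steps needed when $j < i$ are exactly the reverse implication; the asserted statement then follows by transitivity along such a chain.

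Two preliminary observations set things up. First, $p\O = \p^{p-1}$, so $\p^{i+(p-1)} = p\,\p^i$; this is immediate from $[\O : p\O] = p^{[K:\Q_p]} = p^{p-1}$ together with $\p^{p-1}$ being the unique ideal of that index, or directly from $p = u\kappa^{p-1}$ for a unit $u$. Second, for any $P$-submodule $M$ of $\O$ the image $\gamma(M)$ is an ideal: it is a $\Z_p$-submodule of $\O$ stable under multiplication by $\theta$, and since $\O = \Z_p[\theta]$ such a submodule is automatically closed under multiplication by all of $\O$. Hence $\gamma(\p^i \wedge \p^i) = \p^\mu$ for some $\mu \in \N_0 \cup \{\infty\}$, and membership in $\hH_i$ is precisely the condition $\mu = 2i+1$.

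The heart of the argument is a scaling identity. Writing $j = i+(p-1)$ I would use $\p^j = p\,\p^i$ together with bilinearity of the wedge to obtain $\p^j \wedge \p^j = p^2(\p^i \wedge \p^i)$: every generator $u \wedge v$ with $u = pa$ and $v = pb$ equals $p^2(a\wedge b)$, and conversely. Because $\gamma$ is additive and $p^2$ is a positive integer, $\gamma(p^2 z) = p^2\gamma(z)$, so $\gamma(\p^j \wedge \p^j) = p^2\,\gamma(\p^i \wedge \p^i)$. Combining this with $p\O = \p^{p-1}$ gives $\gamma(\p^j \wedge \p^j) = p^2\,\p^\mu = \p^{\mu + 2(p-1)}$. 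Since $2j+1 = (2i+1) + 2(p-1)$, the condition $\mu = 2i+1$ is equivalent to $\mu + 2(p-1) = 2j+1$, so $\gamma \in \hH_i$ and $\gamma \in \hH_j$ coincide, completing the single step.

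I expect no serious obstacle: once the normalization is fixed the statement is essentially a bookkeeping identity. The only points requiring care are the two preliminary facts — that multiplication by $p$ shifts the filtration by exactly $p-1$, and that the image of $\gamma$ on a $\theta$-stable submodule is genuinely an ideal $\p^\mu$, so that the index $\mu$ is well defined and its finiteness is preserved. After that, the equality $\p^j \wedge \p^j = p^2(\p^i \wedge \p^i)$ and the additivity of $\gamma$ do all the work.
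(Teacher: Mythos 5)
Your argument is correct and is essentially the paper's own: the paper disposes of this lemma with the one-line observation, stated just before it, that multiplication by $p$ is an isomorphism $\p^i \to \p^{i+(p-1)}$ inducing a bijection $\hH_i \to \hH_{i+(p-1)}$ whose inverse is division by $p$ --- which is precisely your single-step scaling identity $\gamma(\p^{i+(p-1)} \wedge \p^{i+(p-1)}) = p^2\,\gamma(\p^i \wedge \p^i)$ combined with $p\O = \p^{p-1}$. Your write-up merely makes explicit the two-way iteration along the chain and the fact that the image of $\gamma$ is an ideal, both of which the paper leaves implicit.
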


We show that the homomorphism $\vartheta_a$ defined in Section \ref{res} is 
in $\hH_i$ for each $i \in \N_0$.

\begin{lemma}
\label{imgs}
Let $i, j \in \N_0$, let $a \in \{2, \ldots, (p-1)/2\}$ and let $o_a$ denote 
the order of $a(1-a)^{-1}$ in the multiplicative group $(\Z / p\Z)^*$. Then
\[ \vartheta_a(\p^i \wedge \p^j) = \p^{i+j+\epsilon(i,j)},\]
where $\epsilon(i,j) = 1$ if $o_a \mid (i-j)$ and $\epsilon(i,j) = 0$ 
otherwise. Thus $\vartheta_a(\p^i \wedge \p^i) = \p^{2i+1}$.
\end{lemma}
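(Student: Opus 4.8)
The plan is to exploit that $\vartheta_a$ is $\Z_p$-bilinear and alternating and, as one checks from $\sigma_c(\theta x)=\theta^c\sigma_c(x)$, $P$-equivariant for the diagonal action; hence $\vartheta_a(\p^i\wedge\p^j)$ is a $\theta$-invariant $\Z_p$-submodule of $\O$, and since $\O=\Z_p[\theta]$ this is an ideal, necessarily $\p^\lambda$ for some finite $\lambda$ (it contains the nonzero value $\vartheta_a(\kappa^i\wedge\kappa^j)$). Thus $\lambda$ equals the least valuation among the values $\vartheta_a(x\wedge y)$ on generators, and it suffices to minimise over $x=\kappa^i\xi$, $y=\kappa^j\eta$ with $\xi,\eta\in\O$, using $\p^i=\kappa^i\O$. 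Writing $\sigma_c(\kappa)=\theta^c-1=\kappa u_c$ with $u_c$ a unit and $u_c\equiv c\bmod\p$, one factors out $\kappa^{i+j}$ and is reduced to computing $v(W(\xi,\eta))$, where
\[ W(\xi,\eta)=u_a^iu_{1-a}^j\,\sigma_a(\xi)\sigma_{1-a}(\eta)-u_{1-a}^iu_a^j\,\sigma_{1-a}(\xi)\sigma_a(\eta),\]
so that $\lambda=i+j+\min_{\xi,\eta\in\O}v(W(\xi,\eta))$ and the claim becomes $\min_{\xi,\eta}v(W)=\epsilon(i,j)$.

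Next I would reduce $W$ modulo $\p$. Since $K/\Q_p$ is totally ramified the residue field is $\F_p$ and every $\sigma_c$ acts trivially on it, so with $r:=a(1-a)^{-1}\bmod p$ one obtains
\[ W(\xi,\eta)\equiv \bar\xi\,\bar\eta\,(1-a)^{i+j}r^{\,j}\bigl(r^{\,i-j}-1\bigr)\pmod{\p}.\]
If $o_a\nmid(i-j)$ then $r^{\,i-j}\neq1$ in $\F_p$, so taking $\xi=\eta=1$ yields $v(W)=0$; as $v(W)\ge 0$ always, the minimum is $0$ and $\epsilon=0$. If $o_a\mid(i-j)$ then $r^{\,i-j}=1$, so $W\equiv0\bmod\p$ for every $\xi,\eta$, giving $v(W)\ge 1$ and hence $\epsilon\ge1$; here the remaining task is the matching upper bound, namely producing $\xi,\eta$ with $v(W)=1$.

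This last point is the main obstacle, because for $\xi=\eta=1$ the element $W(1,1)=u_a^iu_{1-a}^j-u_{1-a}^iu_a^j$ can a priori vanish to order higher than one, so a single evaluation need not detect valuation $1$. I would circumvent this by comparing two evaluations: with $A:=u_a^iu_{1-a}^j$ and $B:=u_{1-a}^iu_a^j$ units in $\O$ one has $W(1,1)=A-B$ and, using $\sigma_c(\theta)=\theta^c$, $W(1,\theta)=A\theta^{1-a}-B\theta^a$, whence
\[ W(1,\theta)-\theta^{1-a}W(1,1)=B\,\theta^{1-a}\bigl(1-\theta^{2a-1}\bigr).\]
Because $a\in\{2,\ldots,(p-1)/2\}$ forces $2a-1\not\equiv0\bmod p$, the factor $1-\theta^{2a-1}=-\sigma_{2a-1}(\kappa)$ has valuation exactly $1$, while $B\theta^{1-a}$ is a unit, so the right-hand side has valuation $1$. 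Since $v(W(1,1))\ge1$ in this case, either $v(W(1,1))=1$, or else $v(W(1,1))\ge2$ and then $v(W(1,\theta))=1$; in both cases $\min_{\xi,\eta}v(W)=1$, so $\epsilon=1$. This yields $\lambda=i+j+\epsilon(i,j)$, and specialising to $j=i$ (where $o_a\mid0$ holds automatically) gives $\vartheta_a(\p^i\wedge\p^i)=\p^{2i+1}$.
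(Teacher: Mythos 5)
Your proof is correct, and while its skeleton matches the paper's (reduce to elementary wedges $\kappa^i\xi\wedge\kappa^j\eta$, factor out $\kappa^{i+j}$ and units $u_c=\sigma_c(\kappa)/\kappa\equiv c\bmod\p$, then decide $\epsilon$ by a residue computation), the decisive step is handled by a genuinely different and arguably cleaner device. The paper settles the case $o_a\mid(i-j)$ by a second\mbox{-}order expansion modulo $\p^2$ of $\theta^{2ah-h}s_a^{i-j}s_{1-a}^{j-i}$, which requires the binomial approximations $(1+e\kappa)^b\equiv 1+be\kappa$ and a judicious choice of $h$ with $h+(i-j)/2\not\equiv 0\bmod p$; you instead avoid all mod-$\p^2$ arithmetic via the exact identity $W(1,\theta)-\theta^{1-a}W(1,1)=B\theta^{1-a}(1-\theta^{2a-1})$, whose right-hand side visibly has valuation exactly $1$ because $2a-1\not\equiv 0\bmod p$, so at least one of the two evaluations $W(1,1)$, $W(1,\theta)$ must have valuation exactly $1$. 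This buys a shorter and more robust argument (no division by $2$, no expansion of $s_a$), at the cost of being slightly less explicit about which generator witnesses the valuation. One small inaccuracy: your parenthetical claim that $\vartheta_a(\kappa^i\wedge\kappa^j)$ is a nonzero witness for finiteness of $\lambda$ fails when $i=j$ (there $W(1,1)=A-B=0$ identically); this is harmless, since your case analysis independently produces elements of valuation $i+j$ or $i+j+1$ (e.g.\ $\kappa^i\wedge\theta\kappa^j$), but the parenthetical should be dropped or replaced by that witness.
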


\begin{proof}
The ideal $\p^i$ has the $\Z_p$-basis $\{ \theta^h \kappa^i \mid 0 \leq h 
\leq p-2\}$. Since $\vartheta_a$ is compatible with $\theta$ and $\Z_p$, it 
suffices to evaluate the terms 
$\vartheta_a( \theta^h \kappa^i \wedge \kappa^j )$
for fixed $i,j$ and every $h \in \{0, \ldots, p-2\}$ to determine the image
$\vartheta_a( \p^i \wedge \p^j)$. Define $s_a = 1 + \theta + \ldots + 
\theta^{a-1}$ and note that $\sigma_a(\kappa) = \kappa s_a$. As $s_a \equiv 
a \bmod \p$, it follows that $s_a \in \U$. Further,
\begin{eqnarray*}
\vartheta_a( \theta^h \kappa^i \wedge \kappa^j ) 
&=& \kappa^{i+j} ( \theta^{ah} s_a^i s_{1-a}^j 
                     - \theta^{(1-a)h} s_{1-a}^i s_a^j ) \\
&=& \kappa^{i+j} u ( \theta^{2ah-h} s_a^{i-j} s_{1-a}^{j-i} - 1)
\end{eqnarray*}
for some unit $u$. Calculating mod $\p$ yields
\[  \theta^{2ah-h} s_a^{i-j} s_{1-a}^{j-i} - 1
            \equiv (a (1-a)^{-1})^{i-j} - 1 \bmod \p. \]
Hence if $o_a \nmid (i-j)$, then $(a(1-a)^{-1})^{i-j} \neq 1 \bmod p$ 
and $\vartheta_a( \theta^h \kappa^i \wedge \kappa^j) = \kappa^{i+j} v$ 
for some unit $v$. This implies that $\vartheta_a( \p^i \wedge \p^j ) 
= \p^{i+j}$. 

In the remainder of the proof we consider the case $o_a \mid (i-j)$. Now
$\vartheta_a(\p^i \wedge \p^j) \leq \p^{i+j+1}$ and it remains to show that 
$\vartheta_a( \p^i \wedge \p^j ) \not \subseteq \p^{i+j+2}$. 

We investigate $\theta^{2ah-h} s_a^{i-j} s_{1-a}^{j-i} \bmod \p^2$. 
As $(1+ e \kappa)^b \equiv 1 + b e \kappa \bmod \p^2$ for $b \in \Z$ and 
$e \in \Q$ it follows that
\[ \theta^{2ah-h} = (1+\kappa)^{2ah-h} \equiv 1 + (2ah-h) \kappa \bmod \p^2. \]
Further,
\[ s_a 
   = \sum_{b=0}^{a-1} \theta^b 
   = \sum_{b=0}^{a-1} (1+\kappa)^b 
   \equiv \sum_{b=0}^{a-1} 1+b \kappa = a + \kappa a(a-1)/2 \bmod \p^2.\]
As $o_a \mid (i-j)$, it follows that $a^{(i-j)} \equiv (1-a)^{(i-j)} \bmod p$. 
Since $p \O = \p^{p-1} \leq \p^2$, this yields that $a^{(i-j)} \equiv 
(1-a)^{(i-j)} \bmod \p^2$. Hence calculating modulo $\p^2$ implies
that
\begin{eqnarray*}
 &&  \theta^{2ah-h} s_a^{i-j} s_{1-a}^{j-i}  \\
 &=& (1+(2ah-h) \kappa) 
          (a+a(a-1)/2 \kappa)^{i-j} ((1-a)+a(a-1)/2 \kappa)^{j-i} \\
 &=& (1+(2ah-h) \kappa) (1+(a-1)/2 \kappa)^{i-j} (1-a/2 \kappa)^{j-i} \\
 &=& 1 + \kappa ( h(2a-1) + (i-j) (a-1)/2 - (j-i) a/2) \\
 &=& 1 + \kappa ( (2a-1)(h + (i-j)/2)). 
\end{eqnarray*}
Choose $h \in \{ 0, \ldots, p-2\}$ with $h + (i-j)/2 \neq 0 \bmod p$.
As $2a-1 \neq 0 \bmod p$, it follows that 
$\theta^{2ah-h} s_a^{i-j} s_{1-a}^{j-i} \neq 1 \bmod \p^2$. This
yields the desired result.
\end{proof}

While the images on $p^i \wedge \p^j$ can be determined explicitly for
$\vartheta_a$, this is not so easy for arbitrary homomorphisms $\gamma 
\in H$. We recall the following bound from \cite[Lemma 3.2]{LMc76} and
include its elementary proof for completeness.

\begin{lemma}
\label{gammimgs}
Let $i, j \in \N_0$ and $\gamma \in H$. Then
$\gamma( \p^i \wedge \p^j ) \leq \p^{i+j-(p-2)}$.
\end{lemma}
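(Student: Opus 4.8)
The plan is to exploit that $\gamma$ is not merely $\Z_p$-linear but $\O$-linear for the diagonal action: $P$-equivariance together with $\Z_p$-linearity gives $\gamma(\theta^k\cdot z)=\theta^k\gamma(z)$ for all $k$, and hence $\gamma(\mu\cdot z)=\mu\gamma(z)$ for every $\mu\in\O$. Consequently $\gamma(\p^i\wedge\p^j)$ is a $\theta$-stable $\Z_p$-submodule of $\O$, i.e. an ideal $\p^\nu$, and it suffices to bound $\nu$ from below. Write $v$ for the valuation of $K$ with $v(\p)=1$. After extending scalars so that the $P$-action on $\O$ diagonalises, let $e_1,\ldots,e_{p-1}$ be eigenvectors with $\theta e_m=\zeta^m e_m$ for a primitive $p$-th root of unity $\zeta$; equivariance forces $\gamma(e_m\wedge e_n)=\lambda_{mn}e_{m+n}$, with $\lambda_{mn}=0$ whenever $m+n\equiv 0\bmod p$. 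The entire statement will follow from the single estimate $v(\lambda_{mn})\ge-(p-2)$.

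First I reduce the lemma to this estimate. Writing $x=\sum_m\sigma_m(x)e_m$ and $y=\sum_n\sigma_n(y)e_n$, for $x\in\p^i$ and $y\in\p^j$ one has $v(\sigma_m(x))\ge i$ and $v(\sigma_n(y))\ge j$, since the valuation is Galois-invariant. Expanding gives $\gamma(x\wedge y)=\sum_{m<n}(\sigma_m(x)\sigma_n(y)-\sigma_n(x)\sigma_m(y))\,\lambda_{mn}\,e_{m+n}$, and since $\gamma(x\wedge y)\in K$ all of its Galois components share the valuation $v(\gamma(x\wedge y))$. Reading off one such component yields $v(\gamma(x\wedge y))\ge i+j+\min_{m,n}v(\lambda_{mn})$, so $v(\lambda_{mn})\ge-(p-2)$ immediately gives $\gamma(\p^i\wedge\p^j)\le\p^{i+j-(p-2)}$. (Note that the lazier reduction of pulling a diagonal factor $\kappa^i$ out of one wedge slot is too wasteful; keeping both valuations $i$ and $j$ is exactly what the eigenexpansion achieves.)

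It remains to bound $\lambda_{mn}$, and this is the crux. Using the idempotents $\tfrac1p\sum_k\zeta^{-mk}\theta^k$ that project onto eigenspaces, together with the identity $\gamma(\theta^k\wedge\theta^l)=\theta^k\beta_{l-k}$ where $\beta_c:=\gamma(1\wedge\theta^c)\in\O$, a direct computation collapses to
\[ \lambda_{mn}=\tfrac{1}{p}\sum_{c=1}^{p-1}\zeta^{-nc}\,\sigma_{m+n}(\beta_c). \]
Because $v(1/p)=-(p-1)$ and $v(\sigma_{m+n}(\beta_c))=v(\beta_c)\ge 0$, the naive estimate only delivers $v(\lambda_{mn})\ge-(p-1)$, which is off by one. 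The missing power of $\p$ is recovered from antisymmetry: from $\gamma(\theta^c\wedge 1)=-\gamma(1\wedge\theta^c)$ one derives $\beta_{-c}=-\theta^{-c}\beta_c$, and pairing the terms indexed by $c$ and $p-c$ turns the coefficient of $\sigma_{m+n}(\beta_c)$ into $\zeta^{-nc}-\zeta^{-mc}=\zeta^{-mc}(\zeta^{(m-n)c}-1)$. Since $m\ne n$ the exponent $(m-n)c$ is nonzero mod $p$, so $\zeta^{(m-n)c}-1$ is an associate of $\kappa$ and has valuation $1$. Hence every paired term has valuation at least $-(p-1)+0+1=-(p-2)$, giving $v(\lambda_{mn})\ge-(p-2)$.

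I expect the antisymmetry pairing to be the real obstacle: the elementary idempotent bound loses exactly one power of $\p$ too much, and only the cancellation forced by $\beta_{-c}=-\theta^{-c}\beta_c$ restores it. This is consistent with the fact that the lost quantity $p-2$ is the exponent of the different of the tamely ramified extension $K/\Q_p$ of degree $p-1$.
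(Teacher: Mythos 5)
Your proof is correct, but it takes a genuinely different route from the paper's. The paper argues by elementary induction: it first settles $j=0$ by writing $i=s(p-1)+r$ with $0\le r\le p-2$, so that $\p^i=p^s\p^r$ and $\gamma(\p^i\wedge\O)=p^s\gamma(\p^r\wedge\O)\le p^s\O=\p^{i-r}\le\p^{i-(p-2)}$, and then increases $j$ one step at a time via the telescoping identity $\gamma(a\wedge b)=\kappa\,\gamma(\theta^{-1}a\wedge b\kappa^{-1})-\gamma((a-\theta^{-1}a)\wedge b\kappa^{-1})$, using that $a-\theta^{-1}a\in\p^{i+1}$; each summand then lands in $\p^{i+j+1-(p-2)}$ by induction. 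You instead extend scalars to split the $P$-action into eigenspaces, compute the structure constants $\lambda_{mn}$ explicitly via the idempotents $\frac1p\sum_k\zeta^{-mk}\theta^k$, and recover the missing power of $\p$ (beyond the naive $v(1/p)=-(p-1)$ bound) from the antisymmetry relation $\beta_{-c}=-\theta^{-c}\beta_c$, which forces the factor $\zeta^{-nc}-\zeta^{-mc}$ of valuation $1$. All the steps check out: the image is indeed an ideal by $\theta$-stability, $v(\sigma_m(x))=v(x)$ reduces everything to $v(\lambda_{mn})\ge-(p-2)$, the idempotent computation of $\lambda_{mn}$ is right, and the pairing $c\leftrightarrow p-c$ is clean since $p$ is odd. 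The paper's induction is shorter and stays entirely inside $\O$, with no base change or explicit structure constants; your argument is heavier but more structural: it is essentially the eigenvector decomposition underlying the $\vartheta_a$ of Theorem \ref{span} (the $\vartheta_a$ are exactly the contributions with $m+n\equiv 1$), it explains where the constant $p-2$ comes from (the exponent of the different of $K/\Q_p$), and it makes visible which $\gamma$ attain the bound.
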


\begin{proof}
Let $i \in \N_0$ and consider $j = 0$. Write $i = s(p-1)+r$ with $r \in
\{0, \ldots, p-2\}$. Then $\p^i = p^s \p^r$ and $\gamma(\p^i \wedge \O)
= p^s \gamma( \p^r \wedge \O)$. As $\gamma( \p^r \wedge \O) \leq \O
\leq \p^{r-(p-2)}$, it follows that $\gamma(\p^i \wedge \O) \leq p^s
\p^{r-(p-2)} = \p^{s(p-1)+r-(p-2)} = \p^{i-(p-2)}$. The result
follows for all $i$ and $j = 0$.\\
Now suppose that the result is proved for all $i$ and some fixed $j
\geq 0$. We show that it holds for $j+1$. Let $a \in \p^i$
and $b \in \p^{j+1}$. Then
\begin{eqnarray*}
&& \gamma( a \wedge b) \\
 &=& \gamma( a \wedge b \kappa^{-1} (\theta-1)) \\
 &=& \gamma( a \wedge b \kappa^{-1} \theta)
                - \gamma( a \wedge b \kappa^{-1}) \\
 &=& \theta \gamma( \theta^{-1} a \wedge b \kappa^{-1})
                - \gamma( a \wedge b \kappa^{-1}) \\
 &=& (\kappa+1) \gamma( \theta^{-1} a \wedge b \kappa^{-1})
                - \gamma( a \wedge b \kappa^{-1}) \\
 &=& \kappa \gamma( \theta^{-1} a \wedge b \kappa^{-1})
     +   \gamma( \theta^{-1} a \wedge b \kappa^{-1})
                - \gamma( a \wedge b \kappa^{-1}) \\
 &=& \kappa \gamma( \theta^{-1} a \wedge b \kappa^{-1})
     -   \gamma( (a - \theta^{-1} a) \wedge b \kappa^{-1}).
\end{eqnarray*}
Now $b \kappa^{-1} \in \p^j$, so induction applies. Further,
$\theta^{-1} a \in \p^i$ and $a - \theta^{-1} a = \theta^{-1} a \kappa
\in \p^{i+1}$. Hence both summands are in $\p^{i+j+1-(p-2)}$ by induction.
\end{proof}

Next, we recall some well-known results on the splitting of
homomorphisms in $H$ into a linear combination of the elements 
$\vartheta_a$. Let $l = (p-3)/2$ and for $2 \leq a \leq l+1$ define
\[ u_a = (\theta^a-1)(\theta^{1-a}-1) \in \p^2.\]
Let $V_i$ be the diagonal matrix with diagonal entries 
$(\theta^a - \theta^{1-a}) u_a^i / \kappa^{2i+1}$ for $2 \leq a \leq l+1$ 
and let $B$ be the Vandermonde matrix with entries $u_a^{j-1}$ for $2 
\leq a \leq l+1$ and $1 \leq j \leq l$. 

\begin{theorem}
\label{span}
Let $i \in \N_0$ and $l = (p-3)/2$. 
\begin{items}
\item[\rm (a)]
For each $\gamma \in H$ there exist (unique) $c_2, \ldots, 
c_{l+1} \in K$ with $\gamma = \sum_a c_a \vartheta_a$. 
\item[\rm (b)]
If $\gamma = \sum c_a \vartheta_a \in \hH_i$, then 
$(c_2, \ldots, c_{l+1}) V_i B \in \O^l \setminus \p^l$. 
\end{items}
\end{theorem}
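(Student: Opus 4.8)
The plan is to base everything on evaluating $\gamma$ on the elements $\theta^h\kappa^i\wedge\kappa^i$. A direct computation using $\sigma_a(\kappa)=\theta^a-1$ gives the clean formula
\[ \vartheta_a(\theta^h\kappa^i\wedge\kappa^i)=u_a^{\,i}\,(\theta^{ah}-\theta^{(1-a)h}),\qquad 1\le h\le p-2, \]
so for $\gamma=\sum_a c_a\vartheta_a$ one gets $\gamma(\theta^h\kappa^i\wedge\kappa^i)=\kappa^{2i+1}P_h$ with $P_h=\sum_a w_a R_{a,h}$, where $v_a$ is the $a$-th diagonal entry of $V_i$, $w_a=c_av_a$, and $R_{a,h}=(\theta^{ah}-\theta^{(1-a)h})/(\theta^a-\theta^{1-a})$. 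The decisive structural fact is that $R_{a,h}$ is symmetric in $x=\theta^a$, $y=\theta^{1-a}$, hence a polynomial in $x+y=(\theta+1)-u_a$ and $xy=\theta$; explicitly $R_{a,h}=\rho_h(u_a)$ for some $\rho_h\in\O[T]$ of degree $h-1$ with unit leading coefficient $(-1)^{h-1}$. Writing $M_j:=\sum_a w_a u_a^{\,j-1}=\big((c)V_iB\big)_j$, the identities $P_h=\sum_{k<h}(\rho_h)_kM_{k+1}$ assemble into a lower-triangular matrix in $\mathrm{GL}(\O)$ relating the $P_h$ to the $M_j$.

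For (a) I would first compute $\dim_K H$, viewing $H$ as a $K$-space by post-composition with multiplication on the target $\O$ (this commutes with $P$ since $P$ acts by multiplication by $\theta$). Extending scalars to $\ol{\Q_p}$ and diagonalising, $\O\otimes\ol{\Q_p}$ has the eigenvalues $\zeta,\dots,\zeta^{p-1}$ each once, so $\O\wedge\O$ has the eigenvalues $\zeta^{j+k}$ ($1\le j<k\le p-1$), and a $P$-equivariant map can be nonzero on the $(j,k)$-summand only if $j+k\not\equiv0\bmod p$; counting gives $\dim_{\Q_p}H=\binom{p-1}{2}-\tfrac{p-1}{2}$, hence $\dim_K H=\tfrac{p-3}{2}=l$. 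It then suffices to show $\vartheta_2,\dots,\vartheta_{l+1}$ are $K$-linearly independent. If $\sum_a c_a\vartheta_a=0$ then $P_h=0$ for all $h$, so $M_1=\dots=M_l=0$ by the triangular relation; since the $l$ nodes $u_2,\dots,u_{l+1}$ are pairwise distinct (from $u_a=\theta+1-\theta^a-\theta^{1-a}$ and the $\Q$-independence of $1,\theta,\dots,\theta^{p-2}$, an equality $u_a=u_{a'}$ would force $\{a,1-a\}\equiv\{a',1-a'\}\bmod p$, impossible for distinct $a,a'\in\{2,\dots,(p-1)/2\}$), the Vandermonde $B$ is invertible over $K$ and all $w_a$, hence all $c_a$, vanish. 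Uniqueness in (a) is then immediate.

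For (b) note that, since $\gamma$ is $P$-equivariant, $\O=\Z_p[\theta]$, and $\p^i\wedge\p^i$ is generated as an $\O$-module (diagonal action) by the $\theta^h\kappa^i\wedge\kappa^i$, the image $\gamma(\p^i\wedge\p^i)$ is the $\O$-ideal generated by the values $\kappa^{2i+1}P_h$; as $\O$ is a discrete valuation ring this ideal equals $\p^\mu$ with $\mu=2i+1+\min_h\nu(P_h)$, so $\gamma\in\hH_i$ is equivalent to $P_h\in\O$ for all $h$ and $P_h\notin\p$ for some $h$. The triangular $\mathrm{GL}(\O)$-relation turns $P_1,\dots,P_l\in\O$ into $M_1,\dots,M_l\in\O$, that is $(c)V_iB\in\O^l$; it also turns ``some $P_h$ is a unit'' into ``some $M_j$ is a unit''. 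Finally, the nodes $u_2,\dots,u_{l+1}$ are the roots of a monic polynomial $\prod_a(T-u_a)\in\O[T]$ whose non-leading coefficients $e_t$ lie in $\p^{2t}$, so the Newton-type relations $M_{l+1+s}=\sum_{t\ge1}(-1)^{t-1}e_tM_{l+1+s-t}$ force every $M_j$ with $j>l$ into $\p$ once $M_1,\dots,M_l\in\O$. Hence the unit must occur among $M_1,\dots,M_l$, giving $(c)V_iB\notin\p^l$.

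The routine parts are the evaluation formula and the eigenvalue count. The real content is the recognition that testing $\gamma$ against the generators $\theta^h\kappa^i\wedge\kappa^i$ produces precisely the Vandermonde matrix $B$ in the nodes $u_a$ with the diagonal twist $V_i$, together with the identity $R_{a,h}=\rho_h(u_a)$ having unit leading coefficient. The main obstacle I expect is the reduction from the $p-2$ integrality conditions on the $P_h$ to the $l$ conditions defining $\O^l\setminus\p^l$: this needs both the triangular change of basis over $\O$ and the fact that, since each $u_a\in\p^2$, all higher power sums $M_j$ with $j>l$ are automatically non-units, so no unit can escape past the $l$-th coordinate.
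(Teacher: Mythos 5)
Your proposal is correct, but it takes a genuinely more self-contained route than the paper. The paper disposes of (a) by citing \cite[Theorem 8.3.1]{LGM02}, and for (b) it tests $\gamma$ against the $l$ elements $\kappa^{i+j}\wedge\kappa^{i+j-1}$ ($1\le j\le l$), computing $\vartheta_a(\kappa^{i+j}\wedge\kappa^{i+j-1})=\kappa^{2i+1}v_au_a^{j-1}$ and then invoking \cite[Prop.~8.3.5]{LGM02} for the crucial fact that these $l$ values already determine $\gamma$ and detect surjectivity onto $\p^{2i+1}$. You instead test against the $p-2$ generators $\theta^h\kappa^i\wedge\kappa^i$, and the reduction from $p-2$ integrality/unit conditions to the $l$ conditions $(c)V_iB\in\O^l\setminus\p^l$ --- via the symmetric-function identity $R_{a,h}=\rho_h(u_a)$ with unit leading coefficient $(-1)^{h-1}$, the resulting unitriangular change of basis over $\O$, and the Newton recursion forcing $M_j\in\p$ for $j>l$ from $e_t\in\p^{2t}$ --- is precisely the content the paper outsources to the cited propositions; your dimension count $\dim_K(H\otimes\Q_p)=l$ together with the distinctness of the nodes $u_a$ likewise reproves the cited Theorem 8.3.1. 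Both arguments hinge on the same Vandermonde structure in the nodes $u_a$ with the diagonal twist $V_i$; yours buys independence from \cite{LGM02} at the cost of the extra combinatorics, and I verified the key computations ($\vartheta_a(\theta^h\kappa^i\wedge\kappa^i)=u_a^i(\theta^{ah}-\theta^{(1-a)h})$, $u_a=(\theta+1)-(\theta^a+\theta^{1-a})$, pairwise distinctness of the $u_a$ from $a+a'\le p-1$). The only cosmetic imprecision is calling $H$ itself a $K$-space: post-composition with $c\in K$ need not preserve integrality of values, so you should work in $\Hom_P(\O\wedge\O,K)\cong H\otimes_{\Z_p}\Q_p$, which is what part (a) implicitly does since the $c_a$ are allowed to lie in $K$.
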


\begin{proof}
(a) 
This follows from \cite[Theorem 8.3.1]{LGM02}.\\
(b) 
Each homomorphism $\gamma$ is defined by its images on the elements
$\kappa^{i+j} \wedge \kappa^{i+j-1}$ for $1 \leq j \leq l$, see 
\cite[Prop. 8.3.5]{LGM02}. A direct calculation yields that
\[ \vartheta_a( \kappa^{i+j} \wedge \kappa^{i+j-1} ) 
   = \kappa^{2i+1} v_a u_a^{j-1}\]
for $2 \leq a \leq l+1$ and $1 \leq j \leq l$. 
As $\gamma = \sum c_a \vartheta_a$, it follows that 
$\gamma( \kappa^{i+j} \wedge \kappa^{i+j-1})$ corresponds to the 
$j$-th entry in $\kappa^{2i+1} (c_2, \ldots, c_{l+1}) V_i B$. Thus 
$\gamma$ maps surjectively onto $\p^{2i+1}$ if and only if $(c_2, \ldots, 
c_{l+1})V_i B \in \O^l \setminus \p^l$. 
\end{proof}

\section{Construction of groups of maximal class}
\label{frameconst}

Recall that a Lie ring is an additive group with a bracket-multiplication
that is anticommutative and satisfies the Jacobi identity. A Lie $p$-ring
is nilpotent of $p$-power size. In this section we use the Lazard 
correspondence and Lie ring construction of the introduction to determine
$p$-groups of maximal class. Jaikin-Zapirain \& Vera-Lopez \cite{JVL00}
use the Lazard correspondence in an alternative way to investigate 
$p$-groups of maximal class.

\subsection{Lie $p$-rings}

Let $i \in \N_0$ and let $\gamma \in \hH_i$. For $x, y \in \p^i$ define
\[ [x,y] := \gamma(x \wedge y) \mbox{ for } x, y \in \p^i. \]
Then $[.,.]$ is anticommutative. We recall from the introduction that
\[ J_i(\gamma) = \langle [[x,y],z] + [[y,z],x] + [[z,x],y] \mid 
x,y,z \in \p^i \rangle.\]
Then $J_i(\gamma) = \p^\ll$ with $\ll \in \N \cup \{ \infty \}$ and 
$L_i(\gamma) = \p^i / \p^\ll$ is the induced Lie ring. For $m \in \N$ 
with $i \leq m \leq \ll$ we obtain the Lie ring $L_{i,m}(\gamma)$ 
corresponding to $\p^i / \p^m$.  Note that $L_{i,m'}(\gamma)$ is a 
quotient of $L_{i,m}(\gamma)$ for $m' \leq m$ and $L_{i,m}(\gamma)$ is
a quotient of $L_i(\gamma)$.

The next theorem has an elementary proof and is a first step towards 
understanding the size and nilpotency class of $L_{i,m}(\gamma)$. 
Recall that the lower central series of a Lie ring $L$ is defined by 
$L^{(1)} = L$ and $L^{(j)} = [L^{(j-1)}, L]$ for $j > 1$. 

\begin{theorem}
\label{lcs}
Let $i > p-2$, let $\gamma \in \hH_i$ and let $J_i(\gamma) = \p^\ll$.
\begin{items}
\item[\rm (a)]
$\ll \geq 3i+3-p$. 
\item[\rm (b)]
If $J_i(\gamma) \neq \{0\}$, then $L_i(\gamma)$ is nilpotent and has 
size at least $p^{2i+3-p}$.
\item[\rm (c)]
If $J_i(\gamma) \neq \{0\}$ and $i$ is sufficiently large, then 
$L_i(\gamma)$ has class $3$.
\end{items}
\end{theorem}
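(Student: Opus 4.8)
The plan is to read off the lower central series of $L = L_i(\gamma)$ from the $\p$-adic valuation. Since $\gamma$ is $P$-equivariant, each image $\gamma(\p^a \wedge \p^b)$ is stable under multiplication by $\theta$, hence is an $\O$-submodule of $\O$ and so equals $\p^{d}$ for a well-defined depth $d$. Writing $L^{(n)} = \p^{c_n}/\p^\ll$ (with the convention $\p^{c} = \p^\ll$ once $c \geq \ll$), one has $c_1 = i$ and $c_2 = 2i+1$ because $\gamma$ maps $\p^i \wedge \p^i$ onto $\p^{2i+1}$, and in general $c_{n+1}$ is the depth of $\gamma(\p^{c_n} \wedge \p^i)$. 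Lemma \ref{gammimgs} gives the increment bound $c_{n+1} \geq c_n + (i - (p-2))$, and for $i > p-2$ the increment $i-(p-2)$ is at least $1$.

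For part (a) I would simply note that every generator of $J_i(\gamma)$ is a cyclic sum of triple brackets, each lying in $\gamma(\p^{2i+1} \wedge \p^i) \subseteq \p^{3i+3-p}$ by Lemma \ref{gammimgs}; hence $J_i(\gamma) = \p^\ll \subseteq \p^{3i+3-p}$, i.e.\ $\ll \geq 3i+3-p$. Part (b) is then immediate: $|L_i(\gamma)| = [\p^i : \p^\ll] = p^{\ll - i} \geq p^{2i+3-p}$, and $L$ is nilpotent because the increment bound forces $c_n \to \infty$ while $\ll < \infty$ (as $J_i(\gamma) \neq \{0\}$), so $L^{(n)} = 0$ as soon as $c_n \geq \ll$.

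The substance is part (c), where class $3$ means $L^{(4)} = 0$ and $L^{(3)} \neq 0$, equivalently $c_3 < \ll \leq c_4$. Since each Jacobi generator lies in $\gamma(\p^{2i+1}\wedge\p^i) = \p^{c_3}$, we always have $\ll \geq c_3$, and $\ll>c_2=2i+1$ follows from (a), so $L$ has class at least $2$. To compare $\ll$, $c_3$ and $c_4$ I would exploit the multiplication-by-$p$ bijection $\hH_i \to \hH_{i+(p-1)}$ of Lemma \ref{multp}: a bracket in $n$ entries scales by $p^n$, so passing from $i$ to $i+(p-1)$ shifts $c_3$ and $\ll$ each by exactly $3(p-1)$ and shifts $c_4$ by exactly $4(p-1)$. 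Consequently $\ll - c_3$ is constant along each residue class of $i \bmod (p-1)$, while $c_4 - \ll$ grows by $(p-1)$ at each step and hence tends to infinity. Thus for all sufficiently large $i$ one has $c_4 > \ll$, so $\p^{c_4} \subseteq \p^\ll$ and $L^{(4)} = \overline{\gamma(\p^{c_3}\wedge\p^i)} \subseteq \overline{\p^{c_4}} = 0$, giving class at most $3$.

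It remains to secure $L^{(3)} \neq 0$, that is $\ll > c_3$: the cyclic Jacobi sum must vanish strictly deeper than an individual triple bracket. This is the main obstacle, and it is genuinely computational rather than formal, since the Jacobi identity is already imposed in $L$ and no bracket manipulation alone can force the inequality. I would attack it by analysing the leading form of the bracket in the top graded layer $\p^{c_3}/\p^{c_3+1} \cong \F_p$: the claim to establish is that the three leading coefficients of $[[x,y],z]$, $[[y,z],x]$ and $[[z,x],y]$ always cancel, so that every Jacobiator lands in $\p^{c_3+1}$ and therefore $\ll \geq c_3+1$. Because $\ll - c_3$ is constant in each residue class, it suffices to verify this behaviour for one representative $i$ per class, which I would carry out using the explicit homomorphisms $\vartheta_a$, the image formula of Lemma \ref{imgs}, and the decomposition $\gamma = \sum_a c_a \vartheta_a$ from Theorem \ref{span}. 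Combining $\ll > c_3$ with the inequality $c_4 > \ll$ from the scaling argument yields $c_3 < \ll \leq c_4$, hence class exactly $3$ for all large $i$.
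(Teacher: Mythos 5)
Your argument coincides with the paper's own proof in all essentials: your depths $c_n$ are the paper's $w_k$, parts (a) and (b) are proved identically (each Jacobiator lies in $\gamma(\p^{2i+1}\wedge\p^i)\subseteq\p^{3i+3-p}$ by Lemma \ref{gammimgs}, and nilpotency follows from the increment bound $w_{k+1}\geq w_k+i-(p-2)$), and your scaling argument for (c) --- multiplication by $p$ shifts $w_3$ and $\ll$ by $3(p-1)$ per step of Lemma \ref{multp} while $w_4$ shifts by $4(p-1)$, so eventually $\p^{w_4}\leq\p^{\ll}$ and $L^{(4)}=0$ --- is exactly the paper's.

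The one place you go beyond the paper is the lower bound in (c). You are right that ``class $3$'' literally requires $L^{(3)}\neq 0$, i.e.\ $\ll>w_3$ strictly, and that the scaling argument only delivers $L^{(4)}=0$. Be aware that the paper does not supply this either: its proof of (c) concludes ``class $3$'' directly from $\p^{w_4}\leq\p^{\ll}$, and part (a) gives only $\ll\geq w_3$ (the same is true of Theorem \ref{classbound}, which bounds $\ll-w_3$ from above, not below). So the obstacle you flag is genuine relative to the literal statement, but your proposed resolution --- a leading-term cancellation of the Jacobiator in $\p^{w_3}/\p^{w_3+1}$ computed via the decomposition $\gamma=\sum_a c_a\vartheta_a$ --- is only sketched, not carried out. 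In summary: everything you actually prove is exactly what the paper proves, by the same method; the extra step you identify is left open in both your proposal and the paper's proof.
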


\begin{proof}
Write $L = L_i(\gamma)$. Define $w_1 = i$ and 
$\p^{w_{k+1}} = \gamma( \p^{w_k} \wedge \p^i)$ for $k \geq 1$. Then 
$w_2 = 2i+1$ by the choice of $\gamma$ and $w_{k+1} \geq w_k + i - (p-2)$
for $k \geq 2$ by Lemma \ref{gammimgs}. The lower central series of
$L$ has the terms $L^{(k)} = \p^{w_k}/\p^\ll$ as long as $w_k \leq \ll$, 
since the multiplication in $L$ is defined via $\gamma$. \\
(a) 
By construction, $\p^\ll \leq \p^{w_3}$ and thus $\ll \geq w_3 \geq 
w_2 + i - (p-2) = 3i+3-p$. \\
(b) 
If $i > p-2$, then $w_{k+1} > w_k$ for $k \geq 2$ by the first part of this
proof. Hence if $L_i(\gamma)$ is finite, then it is nilpotent. \\
(c) 
Write $d = p-1$ and let $r \in \{0, \ldots d-1\}$ with $r \equiv i \bmod d$. 
We consider the different Lie 
rings arising from $\gamma \in \hH_j$ for $j \in r + d \N_0$ as in Lemma
\ref{multp}. Write $J_j(\gamma) = \p^{\ll(j)}$ and $L(j) = L_j(\gamma)$.
Define $w_k(j)$ via $\p^{w_k(j)} = \gamma( \p^{w_{k-1}(j)} \wedge \p^j)$ 
corresponding to the $k$-th term of the lower central series of $L(j)$. 
Since $\gamma$ is compatible with multiplication by $p$, 
\[ w_k( r+dh ) = w_k(r) + kdh \mbox{ and } \ll(r+dh) = \ll(r)+3dh \]
for each $h \in \N_0$. Thus $w_3(j)$ grows by $3d$ with increasing $j$.
Similarly, if $\ll(j)$ is finite, then it grows by $3d$. However, $w_4(j)$ 
grows by $4d$. Hence if $\ll(r)$ is finite and $h \geq (\ll(r)-w_4(r))/d$, 
then $\p^{w_4(r+dh)} \leq \p^{\ll(r+dh)}$ and $L_i(\gamma)$ has class $3$ 
for $i = r+hd$.
\end{proof}

\subsection{The Lazard correspondence}
\label{lazard}

Let $i \in \N_0$ and $\gamma \in \hH_i$. Let $J_i(\gamma) = \p^\ll$ and 
$m \in \{i, \ldots, \ll\}$. Thus $L = L_{i,m}(\gamma)$ has class at most
$p-1$. The Lazard correspondence applies and yields a group $\GG(L)$ of 
order $p^{m-i}$. As a set, $\GG(L)$ has the same elements as $L$: the elements 
of $\p^i / \p^m$. The group operations on $\GG(L)$ can be expressed as 
formulae in the operations of $L$, and their lower central series coincide.
The most commonly occurring case is that $L$ has class 3; we outline
the corresponding group operation.

\begin{lemma}
Let $G = \GG(L)$ and assume that $L$ has class at most $3$. 
\begin{items}
\item[$\bullet$]
The multiplication in $G$ (on the left) translates to the following
(with the Lie bracket on the right):
\[ (a+\p^m)(b+\p^m) = (a+b+ \frac{1}{2}[a,b]
   + \frac{1}{12} ([a,[a,b]] + [b,[b,a]])) + \p^m,\]
\item[$\bullet$]
The commutator in $G$ (on the left) translates to the following 
(with the Lie bracket on the right):
\[ [a+\p^m, b+\p^m] = ([a,b] + \frac{1}{2} ([a,[a,b]]+[b,[b,a]])) + \p^m.\]
\item[$\bullet$]
A power in $G$ translates to multiplication in $L$:
\[ (a+\p^m)^x = xa + \p^m \mbox{ for } x \in \Z.\]
\end{items}
\end{lemma}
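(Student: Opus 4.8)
The plan is to derive all three formulas from the Baker--Campbell--Hausdorff (BCH) series, since this series is precisely what defines the group operation on $\GG(L)$ under the Lazard correspondence. Recall that the product of $a$ and $b$ in $G = \GG(L)$ is
\[ a \ast b = a+b+\tfrac12[a,b]+\tfrac1{12}\bigl([a,[a,b]]+[b,[b,a]]\bigr)+(\text{brackets of weight} \geq 4),\]
that the inverse of $a$ is $-a$, and that the series terminates because $L$ is nilpotent. Since $p \geq 5$, the rational coefficients $\tfrac12$ and $\tfrac1{12}=\tfrac1{2^2\cdot 3}$ have denominators prime to $p$ and hence act on the $\Z_p$-module $L = \p^i/\p^m$; this is exactly the reason the Lazard correspondence is restricted to class at most $p-1$.

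First I would dispatch the multiplication formula. The hypothesis that $L$ has class at most $3$ means $L^{(4)} = 0$, so every iterated bracket of weight $\geq 4$ vanishes. The BCH series therefore truncates exactly after its weight-$3$ term, which is the displayed expression. Nothing beyond quoting BCH and invoking $L^{(4)}=0$ is needed here. The power formula is the easiest: a power $(a+\p^m)^x$ for $x \in \N$ is the $x$-fold product $a \ast a \ast \cdots \ast a$, and since $[a,a]=0$ every bracket occurring in the BCH expansion of a product of copies of the single element $a$ contains a repeated entry and hence vanishes. An induction on $x$ then gives $a \ast \cdots \ast a = xa$, while the case $x<0$ follows from $a^{-1}=-a$ and $x=0$ is trivial, so $(a+\p^m)^x = xa+\p^m$ for every $x \in \Z$.

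The commutator is where the actual work lies. I would compute the group commutator $a^{-1}\ast b^{-1}\ast a\ast b = (-a)\ast(-b)\ast a\ast b$ by applying the truncated BCH product three times, discarding all brackets of weight $\geq 4$ at each stage. After forming the intermediate products one collects the surviving weight-$2$ and weight-$3$ contributions: the weight-$2$ part is $[a,b]$, and the weight-$3$ part is a combination of the iterated brackets $[a,[a,b]]$ and $[b,[a,b]]$, which anticommutativity and the Jacobi identity reduce to the stated degree-$3$ correction $\tfrac12([a,[a,b]]+[b,[b,a]])$. As a sanity check one can verify the group identity $a \ast b = (b \ast a) \ast [a,b]$ against the already-established multiplication formula, which pins down the coefficients order by order.

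The main obstacle is purely the bookkeeping. Each intermediate product contributes weight-$3$ terms from two sources — its own weight-$3$ part, and the brackets of a weight-$1$ term with a weight-$2$ term produced by the $\tfrac12[\,\cdot\,,\cdot\,]$ piece of the next BCH application — so the signs and the coefficients $\tfrac12,\tfrac1{12}$ must be tracked carefully through the three nested evaluations. I expect the cleanest route is to fix a Hall basis of the free class-$3$ Lie ring on $\{a,b\}$, namely $\{a,\,b,\,[a,b],\,[a,[a,b]],\,[b,[a,b]]\}$, express every intermediate product in these coordinates, and only translate back to the symmetric form $[a,[a,b]]+[b,[b,a]]$ at the very end; this keeps the computation finite and transparent.
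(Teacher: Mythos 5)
Your overall strategy is the right one --- the paper states this lemma without proof, treating it as the standard inverse Baker--Campbell--Hausdorff formulas underlying the Lazard correspondence --- and your handling of the first and third bullets (truncating BCH at weight $3$ because $L^{(4)}=0$, and killing all brackets in a power of a single element) is complete and correct, as is your observation that the coefficients $\tfrac12$ and $\tfrac1{12}$ are $p$-integral for $p\geq 5$. The gap sits exactly where you defer the computation: the three-fold BCH evaluation of $(-a)\ast(-b)\ast a\ast b$ does \emph{not} ``reduce to the stated degree-3 correction''. Carrying it out (or running your own proposed sanity check $a\ast b=(b\ast a)\ast[a,b]$ against the already-established multiplication formula) gives
\[ [a+\p^m,b+\p^m]=\Bigl([a,b]+\tfrac12\bigl([b,[b,a]]-[a,[a,b]]\bigr)\Bigr)+\p^m \]
for the convention $[g,h]=g^{-1}h^{-1}gh$, and the expression with the two signs exchanged for $[g,h]=ghg^{-1}h^{-1}$; in neither convention is the weight-$3$ term the symmetric sum $\tfrac12([a,[a,b]]+[b,[b,a]])$ displayed in the lemma. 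One can see without any computation that the symmetric form is impossible: since $[h,g]=[g,h]^{-1}$ and inversion corresponds to negation in $L$, the Lie expression representing the group commutator must change sign when $a$ and $b$ are interchanged; $[a,b]$ does, but $\tfrac12([a,[a,b]]+[b,[b,a]])$ is invariant under the swap and so could only appear with coefficient zero. A concrete check in the group of upper unitriangular $4\times 4$ matrices with $a=E_{12}+E_{34}$ and $b=E_{23}$ confirms the corrected sign. So the step you wave through would in fact fail, because the identity you are asked to verify in the second bullet is off by a sign --- the error appears to originate in the paper's statement rather than in your method --- and your write-up should either derive the corrected formula or explicitly record the discrepancy instead of asserting that the bookkeeping produces the displayed expression.
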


As the Lie bracket in $L$ is compatible with multiplication in $\theta$, and 
the multiplication in $\GG(L)$ is based on this, the multiplication 
in $\GG(L)$ is also compatible with the multiplication by $\theta$. Thus we 
have proved the following.

\begin{lemma}
\label{multheta}
Multiplication with $\theta$ defines a ring homomorphism on $L$ and
a group homomorphism on $\GG(L)$.
\end{lemma}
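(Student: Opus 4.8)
The plan is to establish the lemma in two steps: first that the map $\phi \colon x \mapsto \theta x$ is an endomorphism of the Lie ring $L = L_{i,m}(\gamma)$, and then to deduce that the same map is a homomorphism of the group $\GG(L)$ by passing it through the Baker--Campbell--Hausdorff formula that defines the group multiplication.

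For the first step I would begin by checking that $\phi$ is well defined on the quotient $\p^i/\p^m$: since $\p^m$ is an ideal of $\O$, we have $\theta \p^m \subseteq \p^m$, so multiplication by $\theta$ descends to $\p^i/\p^m$. Additivity, $\phi(x+y) = \phi(x)+\phi(y)$, is just distributivity of multiplication in $\O$. Bracket-compatibility is the only point that uses the hypotheses on $\gamma$: by definition $\gamma \in \hH_i \subseteq \Hom_P(\p^i \wedge \p^i, \O)$, so $\gamma$ is $P$-equivariant, where $P = \langle \theta \rangle$ acts diagonally by $\theta \cdot (x \wedge y) = \theta x \wedge \theta y$. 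Hence $[\theta x, \theta y] = \gamma(\theta x \wedge \theta y) = \theta\,\gamma(x \wedge y) = \theta[x,y]$, which is exactly $\phi([x,y]) = [\phi(x), \phi(y)]$. Thus $\phi$ is a Lie ring endomorphism of $L$.

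For the second step I would use that, under the Lazard correspondence, the multiplication of $\GG(L)$ is given by a universal expression in the Lie operations of $L$ --- the truncated Baker--Campbell--Hausdorff series --- in which $a \cdot b$ is written as a finite $\Z_p$-linear combination of $a$, $b$ and their iterated Lie brackets; the rational coefficients occurring have denominators that are $p$-adic units because the class is at most $p-1$ and $p \geq 5$. Since $\phi$ is $\Z_p$-linear, it commutes with these scalar coefficients and with addition, and by the first step it commutes with every iterated bracket. Therefore $\phi(a \cdot b) = \phi(a) \cdot \phi(b)$, so $\phi$ is a group homomorphism of $\GG(L)$. In the most common case, where $L$ has class at most $3$, one can alternatively verify this termwise on the explicit product formula of the preceding lemma, using $[\theta a, \theta b] = \theta[a,b]$ together with its once-iterated consequence $[\theta a,[\theta a,\theta b]] = \theta[a,[a,b]]$.

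The argument has no genuinely hard step; the only point requiring a moment's care is the interaction of $\phi$ with the rational coefficients of the group law, and this is settled by observing that $\phi$ acts $\Z_p$-linearly and that those coefficients are $p$-adically integral in the admissible class range at most $p-1$. Everything else reduces to the $P$-equivariance of $\gamma$.
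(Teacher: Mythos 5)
Your proposal is correct and follows essentially the same route as the paper: the paper's justification is precisely the observation that the bracket is $\theta$-compatible (via the $P$-equivariance of $\gamma$) and that the group operations of $\GG(L)$ are expressed in terms of the Lie operations, so compatibility transfers. You merely spell out the two points the paper leaves implicit, namely well-definedness on $\p^i/\p^m$ and the $p$-integrality of the Baker--Campbell--Hausdorff coefficients in class at most $p-1$.
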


As before, let $P = \langle \theta \rangle$ and recall $S_{i, m}(\gamma) 
= \GG(L_{i,m}(\gamma)) \rtimes P$. 

\begin{theorem}
\label{grpconst}
Let $i \in \N_0$ and $\gamma \in \hH_i$. Then $S = S_{i,m}(\gamma)$ is a 
group of order $p^{m-i+1}$ and maximal class. If $m \leq 2i+1$, then $S$
is a group on the mainline of $\G(p)$, otherwise it is in the branch
$\B_{i+2}$.
\end{theorem}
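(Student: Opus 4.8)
The plan is to read off all three assertions from the $\O$-ideal filtration $G=\p^i/\p^m\supseteq\p^{i+1}/\p^m\supseteq\cdots\supseteq\p^{m-1}/\p^m\supseteq 0$, which I will identify with the portion of the lower central series of $S$ that lies inside $G$. The order is immediate: the Lazard correspondence preserves cardinality, so $|G|=|\p^i/\p^m|=p^{m-i}$ and hence $|S|=|G|\,|P|=p^{m-i+1}$.

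For the maximal class the engine is the uniserial action of $\theta$. By Lemma \ref{multheta} multiplication by $\theta$ is an automorphism of $G$, and for $g=a+\p^m$ the commutator $[g,\theta]=g^{-1}g^{\theta}$ has, through the Lazard formulae, leading term $(\theta-1)a=\kappa a$ with every further term lying in a strictly deeper ideal; since $\kappa\p^{j}=\p^{j+1}$ this gives $[\p^{j}/\p^m,\theta]=\p^{j+1}/\p^m$ of index $p$. I would then prove by induction that the $k$-th term of the lower central series of $S$ equals $\p^{i+k-1}/\p^m$ for every $k\ge 2$. Commuting this term with $\theta$ produces $\p^{i+k}/\p^m$, whereas commuting it with $G$ is governed by the bracket $\gamma(\p^{i+k-1}\wedge\p^{i})$, and the whole argument hinges on this bracket staying inside $\p^{i+k}$. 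For $k=2$ the surjectivity of $\gamma$ gives $\gamma(\p^{i}\wedge\p^{i})=\p^{2i+1}\subseteq\p^{i+1}$, so the second term is exactly $\p^{i+1}/\p^m$, of index $p^2$ in $S$; for $k>2$ the bound $\gamma(\p^{i+k-1}\wedge\p^{i})\subseteq\p^{2i+k-1-(p-2)}$ of Lemma \ref{gammimgs} lands inside $\p^{i+k}$ precisely when $i\ge p-1$, which is the range in which $L_{i,m}(\gamma)$ is nilpotent (Theorem \ref{lcs}) and $S$ is defined. Granting the induction, the series $S>\p^{i+1}/\p^m>\cdots>\p^{m-1}/\p^m>1$ has all factors below the top of order $p$, so $S$ has class $m-i=\log_{p}|S|-1$, i.e.\ maximal class. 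I expect the main obstacle to be exactly this induction: confirming that the Lazard and Jacobi correction terms never re-emerge in an ideal shallower than the $\theta$-filtration predicts. I would handle it uniformly by checking that the group multiplication of $G$ and the bracket $\gamma$ both respect the filtration $\{\p^{j}/\p^m\}$ with the stated jumps, so that no commutator inside $G$ out-runs the contribution of $\theta$.

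Finally, for the dichotomy I would test whether $L$ is abelian. One has $[L,L]=(\p^{2i+1}+\p^m)/\p^m$, so $L$ is abelian exactly when $m\le 2i+1$. In that case $G=\p^i/\p^m$ is abelian and $S=(\p^i/\p^m)\rtimes P$ is a finite quotient of the infinite pro-$p$ group $\p^i\rtimes P$ of maximal class from the Remark, hence a vertex on the mainline. When $m>2i+1$ the bracket is nonzero and $G$ is non-abelian. Here I would use that the center $Z(S)=\p^{m-1}/\p^m$ satisfies $S/Z(S)\cong S_{i,m-1}(\gamma)$, so iterating central quotients decreases $m$ through $S_{i,m-1}(\gamma),S_{i,m-2}(\gamma),\dots$ until it reaches the abelian stage $S_{i,2i+1}(\gamma)$, which by the previous case is the mainline vertex of order $p^{i+2}$, namely $S_{i+2}$. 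Thus $S$ is a descendant of $S_{i+2}$, and it remains to see that $S$ is not a descendant of $S_{i+3}$; equivalently that the first non-abelian stage $S_{i,2i+2}(\gamma)$, the unique order-$p^{i+3}$ member of this tower, differs from the mainline child $S_{i+3}$. I would establish this through an isomorphism invariant separating a genuinely non-abelian stage from the metabelian mainline group that is a quotient of $\p^{i+1}\rtimes P$; this distinction, together with the tower above $S_{i+2}$ remaining on the mainline, shows that $S_{i+2}$ is the deepest mainline ancestor of $S$ and hence $S\in\B_{i+2}$. Matching this central-quotient tower to the graph-theoretic definition of the branch is the delicate point of the second half.
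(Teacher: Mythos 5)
Your strategy is sound and reaches the right conclusions, but it is organized quite differently from the paper's proof, which is far more compressed. The paper makes a single structural observation: $L'=\gamma(\p^i\wedge\p^i)/\p^m$ corresponds to $\p^{2i+1}/\p^m$, the Lazard correspondence transports this to $G'=\GG(L)'$, and then everything is read off from the statement that $S/G'$ is the largest mainline quotient of $S$ --- if $m\le 2i+1$ then $G'=1$ and $S$ is itself mainline, otherwise $|S/G'|=p^{i+2}$ forces $S\in\B_{i+2}$. In particular the paper never writes down the lower central series of $S$, and it leaves both the maximal-class claim and the word ``largest'' essentially implicit. Your proof supplies exactly the content the paper suppresses: the induction $\gamma_k(S)=\p^{i+k-1}/\p^m$ via the uniserial $\theta$-action is the honest justification that $S$ has maximal class, and your central-quotient tower $S_{i,m}(\gamma)\ra S_{i,m-1}(\gamma)\ra\cdots\ra S_{i,2i+1}(\gamma)\cong S_{i+2}$, together with the observation that $S_{i,2i+2}(\gamma)\not\cong S_{i+3}$ (non-abelian versus abelian two-step centralizer), is precisely what ``$S/G'$ is the \emph{largest} mainline quotient'' amounts to. What the paper's route buys is brevity and independence from explicit commutator estimates; what yours buys is an actual verification. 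Two caveats on your version: first, your induction leans on Lemma \ref{gammimgs} and therefore needs $i\ge p-1$ (for $i\le j\le 2i$ one gets $\gamma(\p^j\wedge\p^i)\subseteq\p^{2i+1}\subseteq\p^{j+1}$ for free, but beyond that the bound $\p^{2i+k-1-(p-2)}$ is what you must use), whereas the theorem is stated for all $i\in\N_0$; this restriction is already present in Theorem \ref{lcs} and in every application ($i>p+1$), so it is a defect shared with the paper rather than introduced by you, but you should state it rather than wave at it. Second, your reduction of ``$S$ is not a descendant of $S_{i+3}$'' to a quotient computation silently uses that every normal subgroup of index at least $p^2$ in a $p$-group of maximal class is a term of the lower central series, so that the order-$p^{i+3}$ quotient you exhibit is the only one; that fact should be cited to make the ``delicate point'' you flag actually close.
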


\begin{proof}
Write $L = L_{i,m}(\gamma)$ and $S = S_{i,m}(\gamma)$. Consider $G = 
\GG(L_{i,m}(\gamma))$ as a subgroup of $S$. Recall that $\gamma( \p^i 
\wedge \p^i ) = \p^{2i+1}$ and this corresponds to $L'$. Thus if 
$m \leq 2i+1$, then $L'$ is trivial. Otherwise, $L'$ corresponds to
$\p^{2i+1}/\p^m$. The Lazard correspondence translates this to $G$.
As $S/G'$ is the largest mainline quotient of $S$, the result follows.
\end{proof}

\subsection{The class of a Lie ring}
\label{lieclass}

The proof of the following theorem is based on Theorem \ref{lcs} and the 
theory of the degree of commutativity of finite $p$-groups. Theorem 
\ref{main0} follows directly from it.

\begin{theorem}
\label{classbound}
Let $i \in \N_0$ and $\gamma \in \hH_i$ with $J_i(\gamma) \neq \{0\}$. 
Then $L_i(\gamma)$ has class at most 
\[ 3+\frac{2p-8}{i-(p-2)}.\]
\end{theorem}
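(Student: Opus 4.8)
The plan is to bound the nilpotency class of $L = L_i(\gamma)$ by controlling how fast the terms of the lower central series descend through the ideals $\p^w$. Recall from the proof of Theorem \ref{lcs} that the lower central series of $L$ has terms $L^{(k)} = \p^{w_k}/\p^\ll$ as long as $w_k \leq \ll$, where $w_1 = i$, $w_2 = 2i+1$, and $\p^{w_{k+1}} = \gamma(\p^{w_k} \wedge \p^i)$ for $k \geq 1$. The class of $L$ is then the largest $k$ for which $w_k < \ll$, so I want a lower bound on the growth rate of the $w_k$ and to combine it with the lower bound $\ll \geq 3i+3-p$ from Theorem \ref{lcs}(a).

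First I would establish a clean growth estimate for the $w_k$. By Lemma \ref{gammimgs} applied to $\gamma(\p^{w_k} \wedge \p^i)$, each step satisfies $w_{k+1} \geq w_k + i - (p-2)$ for $k \geq 2$. Writing $\delta = i - (p-2)$ (which is positive since $i > p-2$), this gives $w_k \geq w_2 + (k-2)\delta = 2i+1 + (k-2)(i-(p-2))$ for $k \geq 2$. Thus $L^{(k)}$ is nontrivial only when $\p^{w_k} \supsetneq \p^\ll$, i.e.\ when $w_k < \ll$; once $w_k \geq \ll$ the term has collapsed into $\p^\ll$ and the series has terminated at step $k-1$ or earlier.

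The main step is then to solve the inequality $w_k \geq \ll$ for the first $k$ at which it holds, using the estimates above. Since I only have a lower bound on $\ll$, I need to be careful about direction: to upper-bound the class I want the smallest guaranteed collapse point, so I would combine the lower bound $w_k \geq 2i+1+(k-2)(i-(p-2))$ with a value of $\ll$. The cleanest route is to show that the class $c$ satisfies $w_c < \ll$ while $w_{c+1} \geq \ll$ is forced, and to feed in the bound $\ll \geq 3i+3-p = w_2 + (i-(p-2))$, which corresponds to the $k=3$ term. Setting the generic growth estimate $2i+1 + (k-2)(i-(p-2))$ against $\ll$ and isolating $k$, one obtains $c \leq 3 + \frac{2p-8}{i-(p-2)}$; the constant $3$ and the correction term $\frac{2p-8}{i-(p-2)}$ should emerge directly once the arithmetic of $\ll - w_2$ over the per-step increment $i-(p-2)$ is carried out, with the numerator $2p-8$ tracking the gap between the actual increment rate and the idealized rate of $3(p-1)$ per three steps implicit in the degree-of-commutativity bound.

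The hard part will be justifying the precise constant $\frac{2p-8}{i-(p-2)}$ rather than a cruder bound: the naive per-step increment $i-(p-2)$ from Lemma \ref{gammimgs} alone is too pessimistic, and the sharp constant almost certainly requires the degree of commutativity input advertised before the theorem statement, which improves the increment estimate for the later terms of the lower central series. I expect the genuine content to lie in showing that, beyond the first few terms, the quotients $w_{k+1} - w_k$ are in fact close to $i$ (not merely $i-(p-2)$) for groups of maximal class, so that the deficit $p-2$ is only paid a bounded number of times; quantifying exactly how the accumulated deficit produces the numerator $2p-8$ is where the detailed commutativity calculations enter, and I would isolate that as a separate lemma feeding into this final inequality.
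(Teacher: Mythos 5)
There is a genuine gap, and it sits exactly where you flagged uncertainty. The class of $L_i(\gamma)$ is the largest $k$ with $w_k < \ll$, so to bound it from above you need an \emph{upper} bound on $\ll$ (relative to $w_3$) together with a \emph{lower} bound on the increments $w_{k+1}-w_k$. You have the second ingredient (Lemma \ref{gammimgs} gives $w_{k+1}-w_k \geq i-(p-2)$), but for the first you only invoke $\ll \geq 3i+3-p$ from Theorem \ref{lcs}(a), which is a lower bound and points in the wrong direction: it can only show the series is \emph{at least} so long, never that it terminates. The arithmetic you claim ``should emerge directly'' therefore cannot be carried out from the inputs you list, and your closing heuristic mislocates where the degree of commutativity enters: the paper does \emph{not} improve the per-step increments beyond $i-(p-2)$ (it uses exactly that estimate in the final sum); what it needs, and what you are missing, is the inequality $\ll(i)-w_3(i) \leq 2p-8$.

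That inequality is obtained by a transport argument you did not anticipate. One chooses $j \equiv i \bmod (p-1)$ large enough that $L_j(\gamma)$ has class $3$ (possible by Theorem \ref{lcs}(c)), forms the maximal class group $S_{j,\ll}(\gamma)$, and applies the degree-of-commutativity bound \cite[Cor.\ 3.4.12]{LGM02} to get $|\gamma_3(\GG(L_j(\gamma)))| \leq p^{2p-8}$, hence $\ll(j)-w_3(j) \leq 2p-8$ via the Lazard correspondence. The periodicity relations $w_k(r+dh)=w_k(r)+kdh$ and $\ll(r+dh)=\ll(r)+3dh$ from the proof of Theorem \ref{lcs} show that $\ll - w_3$ is constant along the residue class, so the bound descends to $i$ itself. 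Only then does the telescoping sum $2p-8 \geq \ll(i)-w_3(i) \geq w_{c_i}(i)-w_3(i) \geq (c_i-3)(i-(p-2))$ deliver the stated bound. You correctly sensed that the numerator $2p-8$ must come from degree-of-commutativity input, but without the reduction to a class-$3$ case at large $j$ and the invariance of $\ll - w_3$ under $j \mapsto j+(p-1)$, your outline does not close.
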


\begin{proof}
We retain the notation of Theorem \ref{lcs} and its proof. Let $d=p-1$ and 
$r \in \{0, \ldots, d-1\}$ with $r \equiv i \bmod d$. Choose $j \in r + d \Z$ 
large enough so that $L_j(\gamma)$ has class $3$; such $j$ exists by Theorem 
\ref{lcs}. Let $J_j(\gamma) = \p^\ll$ and let $S = S_{j, \ll}(\gamma)$
be the associated group of maximal class as in Theorem \ref{grpconst}.
Now \cite[Cor. 3.4.12]{LGM02} implies that $|\gamma_3(\GG(L_j(\gamma)))|
\leq p^{2p-8}$, where $\gamma_3(G)$ is the third term of the lower
central series of $G$. (Note that the subgroup $P_1$ used in 
\cite[Cor. 3.4.12]{LGM02} is defined in \cite[p. 56]{LGM02} as a
two-step centralizer of $S$ and coincides with $\GG(L_j(\gamma))$.) 

By the Lazard correspondence, $|L_j(\gamma)^{(3)}| \leq
p^{2p-8}$, where $L^{(3)}$ is the third term of the lower central series
of the Lie ring $L$. As in the proof of Theorem \ref{lcs}, define $w_k(j)$
via $w_1(j) = j$ and $w_2(j) = 2j+1$ and $\p^{w_{k+1}(j)} = \gamma( 
\p^{w_k(j)} \wedge \p^j)$. This yields a series of ideals $\p^{w_k(j)}$
in $\O$. Since $\O$ has a unique chain of ideals, there
exists $c_j \in \N$ so that
\[ p^{w_1(j)} \geq \p^{w_2(j)} \geq \p^{w_3(j)} \geq \ldots 
  \p^{w_{c_j}(j)} \geq \p^{\ll(j)} \geq \p^{w_{{c_j}+1}(j)} \geq \ldots.\]
The lower central series of $L_j(\gamma)$ corresponds to the quotients 
of this series; more precisely, $L_j(\gamma)^{(k)}$ corresponds to 
$\p^{w_k(j)}/\p^{\ll}$ for $1 \leq k \leq c_j$ and $c_j$ is the class
of $L_j(\gamma)$. Thus $(\ll(j) - w_3(j)) 
\leq 2p-8$ follows. Theorem \ref{lcs} and its proof yield that 
$\ll(r+hd) - w_3(r+hd) = \ll(j)-w_3(j)$ for all $h \in \N_0$. Hence 
$\ll(i)-w_3(i) = \ll(j) - w_3(j) \leq 2p-8$.

Lemma \ref{gammimgs} implies that
$w_{k-1}(i)-w_k(i) \leq i-(p-2)$ for $k \in \N$. Thus
\[ \ll(i) - w_3(i) 
   \geq w_{c_i}(i) - w_3(i) 
   = \sum_{k=3}^{c_i-1} w_{k+1}(i) - w_k(i)
   \geq (c_i-3) (i-(p-2)).\]
In summary, $2p-8 \geq \ll(i)-w_3(i) \geq (c_i-3)(i-(p-2))$ follows.
Equivalently, $c_i \leq 3+(2p-8)/(i-(p-2))$. As $c_i$ coincides with the class
of $L_i(\gamma)$, the result follows.
\end{proof}

\section{The frame of a branch}
\label{frameproof}

The {\em frame} $\FF_{i+2}$ of a branch $\B_{i+2}$ is its subtree 
consisting of all groups $S_{i,m}(\gamma)$ with $\gamma \in \hH_i$
as constructed in Section \ref{lazard}. As $S_{i,m-1}(\gamma)$ is 
a quotient of $S_{i,m}(\gamma)$, it follows that $\FF_{i+2}$ is a 
full subtree of $\B_{i+2}$. The
skeleton groups (or constructible groups) are a special case of
frame groups obtained by choosing $m$ so that $L_{i,m}(\gamma)$ has
class $2$. Hence the skeleton of $\B_{i+2}$ is a subtree of 
$\FF_{i+2}$.
\smallskip

In this section we prove Theorem \ref{main1}.
We consider $p \geq 5$ prime and $i > p$.
Let $G$ be a group in the frame $\B_{i+2}$ of $\G(p)$. 
Then $|G| = p^n$ with $n \geq i+2 > p+2$ and thus $G$ is non-abelian 
and $Z(G)$ has order $p$. Further, $G$ and $G/Z(G)$ both have positive 
degree of commutativity by \cite[Theorem 3.3.5]{LGM02}. We
show that $G/Z(G)$ is a frame group; that is, there exists $\gamma \in 
\hH_i$ with $G/Z(G) \cong S_{i,m}(\gamma)$, where $m = n +i - 1$.

Let $G = \gamma_1(G) > \gamma_2(G) > \ldots > \gamma_c(G) > \gamma_{c+1}(G) 
= \{1\}$ denote the lower central series of $G$ and let $M = 
C_G(\gamma_2(G)/ \gamma_4(G))$ be the associated two-step centralizer. 
(Then $M$ coincides with $P_1$ and $K_2$ in the notation of 
\cite[Chap.\ 3]{LGM02}.) Let 
$s \in G \setminus M$, let $s_1 \in M \setminus \gamma_2(G)$ and define 
$s_j = [s_{j-1}, s]$ for $j > 1$. (This notation corresponds to that
in \cite[Sec. 3.2]{LGM02}.)

\begin{lemma}
\label{split}
$G/Z(G)$ is a split extension of $M/Z(G)$ by a cyclic group of order $p$.
\end{lemma}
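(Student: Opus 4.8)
The plan is to exhibit an explicit complement. Since $M = P_1$ is a maximal subgroup of $G$, it has index $p$ and hence is normal, so $M/Z(G)$ is normal of index $p$ in $G/Z(G)$. For any $s \in G \setminus M$ we have $G = \langle s \rangle M$ and $s^p \in M$. In the quotient $G/Z(G)$ the coset $sZ(G)$ lies outside $M/Z(G)$, and the cyclic group $\langle sZ(G)\rangle$ will be a complement to $M/Z(G)$ precisely when $sZ(G)$ has order $p$, that is, when $s^p \in Z(G)$. Thus the whole lemma reduces to the single claim that $s^p \in Z(G)$ for some (in fact, every) $s \in G \setminus M$.

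To prove this claim I would invoke the uniserial action of $s$ on $M$. With the notation $s_j = [s_{j-1},s]$ already fixed, elements $s \notin M$ act uniserially on $M$ in the standard sense of maximal class theory (cf.\ \cite[Chap.\ 3]{LGM02}): the successive factors $M/\gamma_2(G)$ and $\gamma_j(G)/\gamma_{j+1}(G)$ for $2 \leq j \leq n-1$ are cyclic of order $p$, generated by $s_1, \ldots, s_{n-1}$, and conjugation by $s$ carries each generator to the next, with $s_{n-1}$ generating $Z(G) = \gamma_{n-1}(G)$ and $s_n = 1$. The consequence I need is that the fixed points of $s$ in $M$ are exactly the bottom factor, i.e.\ $C_M(s) = \gamma_{n-1}(G) = Z(G)$, of order $p$. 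Indeed, if $y \in M \setminus Z(G)$ and $j$ is chosen so that $y \equiv s_j^a \bmod \gamma_{j+1}(G)$ with $a \not\equiv 0 \bmod p$ and $j \leq n-2$, then $[y,s] \equiv s_{j+1}^a \not\equiv 1 \bmod \gamma_{j+2}(G)$, so $y \notin C_M(s)$.

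Granting $C_M(s) = Z(G)$, the claim is immediate: $s^p \in M$ and $s^p$ commutes with $s$, so $s^p \in C_M(s) = Z(G)$. Hence $sZ(G)$ has order $p$ in $G/Z(G)$ and lies outside the normal subgroup $M/Z(G)$, so $\langle sZ(G)\rangle \cong C_p$ meets $M/Z(G)$ trivially; comparing orders gives $(M/Z(G))\langle sZ(G)\rangle = G/Z(G)$. Therefore $G/Z(G) = (M/Z(G)) \rtimes \langle sZ(G)\rangle$ is the desired split extension.

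The main obstacle is the identification $C_M(s) = Z(G)$, equivalently the uniserial action of elements outside the two-step centralizer. This is exactly the structural feature of maximal class groups that the $s_j$-notation is designed to encode, so I expect to dispatch it by citing \cite[Chap.\ 3]{LGM02} together with the short calculation above; notably, no appeal to the degree of commutativity of $G$ or $G/Z(G)$ is required for this particular lemma.
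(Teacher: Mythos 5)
Your reduction of the lemma to the single claim $s^p \in Z(G)$ is exactly the paper's route; the paper simply quotes \cite[Lemma 3.3.7]{LGM02} for that claim, while you derive it from $C_M(s) = Z(G)$. The chain of deductions ``$s^p \in M$ and $s^p$ commutes with $s$, hence $s^p \in C_M(s)$'' is fine, as is the computation $[s_j^a,s] \equiv s_{j+1}^a \bmod \gamma_{j+2}(G)$ --- but only once one knows that $s_j$ generates $\gamma_j(G)$ modulo $\gamma_{j+1}(G)$ for \emph{every} $j \leq n-1$, with $s_{n-1}$ generating $Z(G)$.

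That last point is precisely where your closing remark, that no appeal to the degree of commutativity is required, goes wrong. For an arbitrary $p$-group of maximal class and an arbitrary $s \in G \setminus M$, the chain $s_1, s_2, \ldots$ need not reach the centre: $s_{j+1}$ generates $\gamma_{j+1}(G)/\gamma_{j+2}(G)$ only if $s$ avoids the two-step centralizer $C_G(\gamma_j(G)/\gamma_{j+2}(G))$, and $s_{n-1}$ generates $Z(G)$ only if $s \notin C_G(\gamma_{n-2}(G))$. These centralizers are maximal subgroups which in general differ from $M = C_G(\gamma_2(G)/\gamma_4(G))$; if $s$ happens to lie in $C_G(\gamma_{n-2}(G)) \setminus M$, then $C_M(s) \supseteq \gamma_{n-2}(G)$ has order $p^2 > |Z(G)|$, and your argument yields only $s^p \in \gamma_{n-2}(G)$, not $s^p \in Z(G)$. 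What rescues the argument in the setting of the lemma is that $|G| = p^n$ with $n > p+2$, so $G$ has positive degree of commutativity by \cite[Theorem 3.3.5]{LGM02} (quoted in the paper immediately before this lemma); this forces all the two-step centralizers and $C_G(\gamma_{n-2}(G))$ to coincide with $M$, whence every $s \in G \setminus M$ does act uniserially and $C_M(s) = Z(G)$. So your proof is correct here, but only because of the degree-of-commutativity input you explicitly disclaim; stated as a fact about general maximal class groups, the key step fails.
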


\begin{proof}
The power $s^p$ is central in $G$ by \cite[Lemma 3.3.7]{LGM02}. Hence 
$s^p \equiv 1 \bmod Z(G)$ and $G/Z(G) \cong M/Z(G) \rtimes 
\langle s \rangle$.
\end{proof}

Shepherd \cite{She70} shows that $cl(M) \leq (p+1)/2 < p$. Thus 
the Lazard correspondence applies to $M$ and yields a Lie $p$-ring
$L(M)$. Similarly, $M/Z(G)$ corresponds to a Lie $p$-ring $L(M/Z(G))$
and this is a quotient of $L(M)$.

\begin{lemma}
\label{addi}
The additive group of $L(M)$ is isomorphic to the additive group of
$\p^i / \p^m$ and conjugation by $s$ induces an automorphism on $M$
that translates to multiplication by $\theta$ on $\p^i / \p^m$.
\end{lemma}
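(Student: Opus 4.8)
The plan is to realise the additive group $A$ of $L(M)$, together with the
automorphism induced by conjugation with $s$, as a \emph{cyclic} module over
$\O$ on which $\theta$ acts through that automorphism, and then to identify
this module with $\p^i/\p^m$. First I would transport the conjugation action
to the Lie ring. Since $M$ has index $p$ in $G$ it is normal, so conjugation
by $s$ is an automorphism $\phi$ of $M$; by functoriality of the Lazard
correspondence (valid as $cl(M) < p$) it induces an automorphism $\alpha \in
\Aut(L(M))$ acting on the common underlying set exactly as $\phi$. By
Lemma \ref{split} the power $s^p$ is central in $G$, hence acts trivially on
$M$ by conjugation; therefore $\alpha^p = \mathrm{id}$ and $A$ becomes a module
over $\Z_p[\alpha]/(\alpha^p-1)$. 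Writing $\beta = \alpha - 1$, the ring
$\Z_p[\beta]$ is local and $\beta$ is nilpotent.

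Next I would exploit the uniserial structure of the $s$-action. The subgroups
$\gamma_j(G)$ for $2 \leq j \leq n-1$ together with $M$ are characteristic in
$G$, hence $\alpha$-invariant, and Lazard carries normal subgroups to ideals,
so they yield an $\alpha$-invariant chain of ideals $A = A_1 \supset A_2
\supset \dots \supset A_{n-1} \supset A_n = 0$ with $A_j$ corresponding to
$\gamma_j(G)$ for $j \geq 2$; each quotient has order $p$ since $G$ has maximal
class. The standard fact that $[\gamma_j(G),s]$ generates $\gamma_{j+1}(G)$
modulo $\gamma_{j+2}(G)$ translates to $\beta(A_j) = A_{j+1}$ for $1 \leq j
\leq n-2$, whence $\ker \beta = A_{n-1}$. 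In particular $A/\beta A = A/A_2$ is
cyclic of order $p$, generated by the image of $s_1$, so by Nakayama $s_1$
generates $A$ as a $\Z_p[\beta]$-module and $A \cong \Z_p[\beta]/\mathrm{Ann}(s_1)$,
with $\beta^{j-1}s_1 \equiv s_j$ generating $A_j/A_{j+1}$.

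It then remains to identify $\Z_p[\beta]$ with $\O$ and $\mathrm{Ann}(s_1)$ with
$\p^{n-1}$. Here I would use $\alpha^p = \mathrm{id}$: expanding $(1+\beta)^p = 1$
gives $\beta\,\Phi_p(1+\beta) = 0$, where $\Phi_p(1+x)$ is the Eisenstein
polynomial of degree $p-1$ cutting out $\O = \Z_p[\kappa]$ via $\kappa \mapsto
x$. Writing $\Phi_p(1+\beta) = pw + \beta^{p-1}$ with $w$ a unit of
$\Z_p[\beta]$, the relation $\beta(pw + \beta^{p-1}) = 0$ together with
$\ker\beta = A_{n-1}$ and the inequality $n-1 > p$ (which holds since $n >
p+2$) forces multiplication by $p$ to coincide with multiplication by
$\beta^{p-1}$ up to a unit, that is $pA = \beta^{p-1}A = A_p$. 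This is exactly
the relation $p\O = \p^{p-1}$, so $\kappa \mapsto \beta$ defines a surjection
$\O \to \Z_p[\beta]$ presenting $A$ as a cyclic $\O$-module on which $\theta =
1+\kappa$ acts as $\alpha = 1+\beta$. Comparing orders gives
$\mathrm{Ann}_{\O}(s_1) = \p^{n-1}$, hence $A \cong \O/\p^{n-1} \cong \p^i/\p^m$
via multiplication by $\kappa^i$ (recall $m = n+i-1$), and under this
isomorphism $\alpha$, i.e. conjugation by $s$, becomes multiplication by
$\theta$, as claimed.

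The main obstacle is precisely this last step: proving that the abstract
$\Z_p[x]/(x^p-1)$-module $A$ descends to an $\O$-module, equivalently that
$\Phi_p(1+\beta)$ contributes no spurious summand supported on the
trivial-action ($\Z_p$) component of $\Z_p[x]/(x^p-1)$. The delicacy is that
$\beta$ is a genuine zero divisor — it is nilpotent, so one cannot simply
cancel it in $\beta\,\Phi_p(1+\beta) = 0$ — and it is only the length of the
uniserial chain, $n-1 > p$, that lets the $\beta^{p-1}$-shift and
multiplication by $p$ be compared term by term and thereby forces the
$\O$-structure. I would carry out this comparison by evaluating
$(pw + \beta^{p-1})s_1$ modulo $A_{p+1}$, where $\beta^{p-1}s_1 \in A_p
\setminus A_{p+1}$, to see that $ps_1$ also generates $A_p/A_{p+1}$ and hence
$pA = A_p$.
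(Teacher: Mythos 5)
Your strategy --- reconstructing the $\O$-module structure of $A=L(M)$ abstractly from the $\Z_p[\langle\alpha\rangle]$-module structure, the uniserial chain coming from the lower central series of $G$, and the relation $\alpha^p=\mathrm{id}$ --- is genuinely different from the paper's proof, which simply quotes \cite[Prop.~3.3.8]{LGM02} for the additive structure and matches the explicit generators $s_j$ against $\kappa^j$. The first part of your argument is sound: $\beta(A_j)=A_{j+1}$ (the BCH correction terms lie deeper in the chain and can be absorbed by a downward iteration), hence $\ker\beta=A_{n-1}$, cyclicity of $A$ over $\Z_p[\beta]$ via Nakayama, and, by the computation you sketch at the end, $pA=\beta^{p-1}A=A_p$.

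The gap is exactly where you suspect it, and your proposed repair does not close it. Writing $h(x)=((1+x)^p-1)/x=pw(x)+x^{p-1}$, the operator $h(\beta)$ equals the norm element $N=1+\alpha+\cdots+\alpha^{p-1}$ of the group ring, and what you need for $\kappa\mapsto\beta$ to define a ring homomorphism $\O=\Z_p[x]/(h)\to\mathrm{End}(A)$ is the identity $N\cdot A=0$. Your relation $\beta\,h(\beta)=0$ only gives $h(\beta)s_1\in\ker\beta=A_{n-1}$, and the refinement ``$ps_1$ generates $A_p/A_{p+1}$, hence $pA=A_p$'' is strictly weaker than $h(\beta)=0$: it is entirely consistent with $h(\beta)s_1$ being a \emph{nonzero} element of the socle $A_{n-1}$ (note $N\beta=0$ and $N^2=pN$ impose no contradiction there). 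This possibility really occurs at the level of abstract modules: there are uniserial cyclic $\Z_p[C_p]$-modules of order $p^{n-1}$ with $\beta^{n-1}=0$ and $p$ equal to a unit times $\beta^{p-1}$ on which $N$ nevertheless acts nontrivially onto the socle (e.g.\ suitable quotients $\Lambda/(x^{n-1},\,h(x)-x^{n-2})$ of $\Lambda=\Z_p[x]/(x\,h(x))$); such a module has a different annihilator from $\O/\p^{n-1}$, so it is not isomorphic to it as a $\Z_p[C_p]$-module and its $\alpha$ cannot be conjugated to multiplication by $\theta$. Ruling this out requires group-theoretic input on $G$ that nowhere enters your argument --- essentially the comparison of $s^p$ with $(ss_1)^p$, both central by \cite[Lemma 3.3.7]{LGM02}, via the Hall--Petrescu collection formula; this is what underlies \cite[Prop.~3.3.8]{LGM02}. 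Citing that proposition, as the paper does, or reproducing that collection argument, is the missing step.
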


\begin{proof}
By construction, $M = \langle s_1, \ldots, s_{n-1} \rangle$.
As sets, $M$ and $L(M)$ can be identified. The Lazard correspondence 
maps $g^x$ in $M$ to $x g$ in $L(M)$ for $x \in \Z$ and vice versa. 
Hence the power structure of $M$ translates to the additive structure 
of $L(M)$. By \cite[Prop.\ 3.3.8]{LGM02} the additive group of $L(M)$ 
has rank $p-1$ and is almost homocyclic. 
By construction, the conjugation by $s$ on $M$ has the form $s_j^s = 
s_j s_{j+1}$ for $j \geq 1$. This coincides with the multiplication by
$\theta$ on the generators $\{ \kappa^j \mid i \leq j \leq i+(p-1) \}$ 
of $\p^i$. 
\end{proof}

The next lemma completes the proof of Theorem \ref{main1}.

\begin{lemma}
\label{multi}
There exists $\gamma \in \Hom_P(\p^i \wedge \p^i, \p^{2i+1})$ with
$L_{i,m-1}(\gamma) \cong L(M/Z(G))$.
\end{lemma}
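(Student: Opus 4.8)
The plan is to transport the Lie bracket of $L(M/Z(G))$ across the identification of Lemma \ref{addi} into a $P$-homomorphism on a wedge square, locate its image in the filtration $\{\p^k\}$, and then lift it to an element of $\hH_i$ by means of the splitting in Theorem \ref{span}.

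First I would fix the identification of the additive group of $L(M/Z(G))$ with $\p^i/\p^{m-1}$ supplied by Lemma \ref{addi}, under which conjugation by $s$ becomes multiplication by $\theta$. Since conjugation by $s$ is an automorphism of $M/Z(G)$, Lemma \ref{multheta} shows that multiplication by $\theta$ is an automorphism of the Lie ring $L(M/Z(G))$; hence its bracket is anticommutative and compatible with $\theta$, i.e. it is a $P$-homomorphism $\bar\gamma : (\p^i/\p^{m-1}) \wedge (\p^i/\p^{m-1}) \to \p^i/\p^{m-1}$. The crucial point is to show that the image of $\bar\gamma$ is exactly $\p^{2i+1}/\p^{m-1}$. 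Under the identification $s_j \leftrightarrow \kappa^{i+j-1}$ the derived subring of $L(M/Z(G))$ corresponds to the derived subgroup $M'$, which sits at filtration level $2i+1$ precisely because the largest mainline quotient of $G$ (equivalently of $G/Z(G)$) is $S_{i+2}$ --- that is what it means for $G$ to lie in $\B_{i+2}$, compare the mainline-quotient computation in the proof of Theorem \ref{grpconst}. Together with the positive degree of commutativity of $G/Z(G)$ this locates the bracket image at $\p^{2i+1}$ and shows that $\bar\gamma$ maps onto $\p^{2i+1}/\p^{m-1}$.

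Next I would lift $\bar\gamma$ to a genuine $\gamma \in \hH_i$. By Theorem \ref{span}(a) every $P$-homomorphism on $\O \wedge \O$ has the form $\sum_a c_a \vartheta_a$, and such a homomorphism is determined by its values on the wedges $\kappa^{i+j} \wedge \kappa^{i+j-1}$ for $1 \le j \le l$; these values are the entries of $\kappa^{2i+1} (c_2,\ldots,c_{l+1}) V_i B$. The homomorphism $\bar\gamma$ prescribes these values modulo $\p^{m-1}$, and since $\bar\gamma$ is onto $\p^{2i+1}/\p^{m-1}$ the tuple $(c_a) V_i B$ is thereby determined modulo $\p^{m-2i-2}$ and has a unit entry there. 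Lifting it to any $\eta \in \O^l \setminus \p^l$ and setting $(c_a) = \eta (V_i B)^{-1}$ produces, by Theorem \ref{span}(b), a homomorphism $\gamma = \sum_a c_a \vartheta_a \in \hH_i$ whose values on the generating wedges reduce to those of $\bar\gamma$ modulo $\p^{m-1}$; hence $\gamma$ reduces to $\bar\gamma$.

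Finally I would conclude. Because $L(M/Z(G))$ is a Lie ring, its bracket satisfies the Jacobi identity, so the Jacobiator of $\gamma$ lies in $\p^{m-1}$ and $J_i(\gamma) = \p^\ll$ with $\ll \ge m-1$; thus $L_{i,m-1}(\gamma)$ is defined. Its additive group, its $\theta$-action, and its bracket all coincide with those of $L(M/Z(G))$ by construction, giving $L_{i,m-1}(\gamma) \cong L(M/Z(G))$ via an isomorphism compatible with multiplication by $\theta$. The main obstacle is the middle claim of the second paragraph, namely that the derived subring lands at filtration level exactly $2i+1$: this is the one place where membership of $G$ in the branch $\B_{i+2}$ (rather than merely positive degree of commutativity) has to be converted into a statement about the $\p$-adic filtration, and it is what forces the target $\p^{2i+1}$ that appears in the definition of $\hH_i$.
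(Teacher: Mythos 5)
Your overall strategy --- reading off the bracket of $L(M/Z(G))$ as a $P$-homomorphism $\bar\gamma$ on $(\p^i/\p^{m-1}) \wedge (\p^i/\p^{m-1})$ with image $\p^{2i+1}/\p^{m-1}$ and then lifting it to an integral $\gamma$ --- identifies the right objects, and your first paragraph (locating $M'$ at filtration level $2i+1$ via the mainline quotient $S_{i+2}$) matches what the paper does. The gap is in the lifting step. You construct $\gamma = \sum_a c_a\vartheta_a$ so that its values on the $l$ generating wedges $\kappa^{i+j}\wedge\kappa^{i+j-1}$ agree with those of $\bar\gamma$ modulo $\p^{m-1}$, and then assert ``hence $\gamma$ reduces to $\bar\gamma$''. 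That inference requires a $P$-homomorphism into the \emph{finite quotient} $\p^{2i+1}/\p^{m-1}$ to be determined by its values on these $l$ wedges. This determination holds for homomorphisms into $\O$ (that is \cite[Prop.~8.3.5]{LGM02}, which underlies Theorem~\ref{span}), but it fails for homomorphisms into finite quotients: by \cite[Theorem~8.3.7]{LGM02}, which the paper invokes at exactly this point, the reduction map $\Hom_P(\p^i\wedge\p^i,\p^{2i+1}) \to \Hom_P(\p^i\wedge\p^i,\p^{2i+1}/\p^m)$ has image of index $p$, supplemented by $N = \Hom_P(\p^i\wedge\p^i,\p^{m-1}/\p^m)$, whose nonzero elements vanish on the generating wedges yet are not reductions of integral homomorphisms. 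Your argument, if valid, would show that \emph{every} surjective $P$-homomorphism onto $\p^{2i+1}/\p^{m-1}$ lifts integrally, contradicting this index-$p$ statement (applied with $m-1$ in place of $m$). So the reduction of your $\gamma$ and the actual bracket $\bar\gamma$ may still differ by a correction term supported in the top layer, and $L_{i,m-1}(\gamma)\cong L(M/Z(G))$ does not follow.

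The missing idea is the one that explains why the lemma concerns $m-1$ and $M/Z(G)$ rather than $m$ and $M$: the paper works one level up, takes the bracket of $L(M)$ as an element of $\Hom_P(\p^i\wedge\p^i,\p^{2i+1}/\p^m)$, splits it as $\gamma_I+\gamma_N$ with $\gamma_I$ in the image of the integral homomorphisms and $\gamma_N\in N$ supported in $\p^{m-1}/\p^m$, and observes that $\gamma_N$ dies precisely when one factors out $Z(G)$, which corresponds to $\p^{m-1}/\p^m$. That is how one proves that $\bar\gamma$ actually lies in the image of the reduction map; it cannot be deduced from surjectivity and values on generating wedges alone.
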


\begin{proof}
Since $G$ is in $\B_{i+2}$, its largest mainline quotient
$G/U$ has order $p^{i+2}$ and satisfies $G/U \cong S_{i+2}$. The group 
$S_{i+2}$ is isomorphic to $(\p^i / \p^{2i+1}) \rtimes P$ and thus has
an abelian two-step centralizer. Hence $M' \leq U$ for the two-step 
centralizer $M$ of $G$.

If $M'$ is trivial, then $G/Z(G) \cong (\p^i / \p^m) \rtimes P$ by
Lemmas \ref{split} and \ref{addi}. Thus $G/Z(G)$ is a mainline group 
and $L(M/Z(G)) \cong L_{i,m-1}(\gamma)$, where $\gamma$ is the 
trivial homomorphism.

We now assume that $M'$ is non-trivial. Then
$Z(G) \leq M'$ and $G/M'$ is a mainline group by Lemmas \ref{split}
and \ref{addi}. Thus $U = M'$. By Lemma \ref{addi}, we identify 
$L(M)$ with $\p^i/\p^m$ as additive group. As $L(M)'$ corresponds to $M'$ 
under the Lazard correspondence, $[L(M) : L(M)'] = p^{i+1}$. 
Hence the multiplication in the Lie $p$-ring $L(M)$ is a bilinear 
antisymmetric map of the form 
$\gamma : \p^i/\p^m \wedge \p^i/\p^m \ra \p^{2i+1}/\p^m$.
As the multiplication in $M$ is compatible with conjugation by $s$, 
the multiplication in $L(M)$ is compatible with the 
multiplication by $\theta$, see Lemma \ref{addi}. Hence 
$\gamma \in \Hom_P(\p^i/\p^m \wedge \p^i/\p^m, \p^{2i+1}/\p^m)$.
We note that $m > 2i+1$, otherwise $M'$ is trivial.
Write $H = \Hom_P(\p^i \wedge \p^i, \p^{2i+1}/\p^m)$. Note that
there are two natural homomorphisms
\begin{eqnarray*}
\psi &:& H \ra \Hom_P(\p^i/\p^m \wedge \p^i/\p^m, \p^{2i+1}/\p^m), 
   \mbox{ and } \\
\phi &:& \Hom_P(\p^i \wedge \p^i, \p^{2i+1}) \ra H.
\end{eqnarray*}
The homomorphism $\psi$ is surjective and thus $\gamma$ is in its image.
We replace $\gamma$ by a preimage under $\psi$, which we also call $\gamma$.
Thus $\gamma \in H$ now. Leedham-Green \& McKay \cite[Theorem 8.3.7]{LGM02}
show that the image $I$ of $\phi$ has index $p$ in $H$ and is supplemented 
by $N = \Hom_P(\p^i \wedge \p^i, \p^{m-1}/\p^m)$. Thus $\gamma = \gamma_I 
+ \gamma_N$ with $\gamma_I \in I$ and $\gamma_N \in N$. Let $\ol{\gamma}$
denote a preimage of $\gamma_I$ under $\phi$. 

Finally, $Z(G)$ corresponds to $Z(L(G))$ and thus to $\p^{m-1}/\p^m$. As 
$\gamma_N$ vanishes modulo $\p^{m-1}/\p^m$ by construction, 
$L(M/Z(G)) = L_{i,m-1}(\ol{\gamma})$ which yields the desired result.
\end{proof}

\section{The isomorphism problem for frame groups}
\label{frameisom}

Let $G$ and $H$ be groups in the frame of $\B_{i+2}$. Then there exist 
$\gamma, \gamma' \in \hH_i$ with $G = S_{i,m}(\gamma)$ and 
$H = S_{i,m}(\gamma')$. The two-step centralizer $M_G$ of
$G$ coincides with $\GG(L_{i,m}(\gamma))$ and similarly, $M_H = 
\GG(L_{i,m}(\gamma')$. The Lazard correspondence implies the following.

\begin{theorem}
\label{isom}
If $G \cong H$, then $L(M_G) \cong L(M_H)$ via an isomorphism that
is compatible with the multiplication by $\theta$.
\end{theorem}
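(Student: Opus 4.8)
The plan is to obtain the statement from two ingredients: that the two-step centralizer is a characteristic subgroup, and that the Lazard correspondence is functorial. Write $G = S_{i,m}(\gamma)$ and $H = S_{i,m}(\gamma')$ with two-step centralizers $M_G = \GG(L_{i,m}(\gamma))$ and $M_H = \GG(L_{i,m}(\gamma'))$, and let $\theta_G$ and $\theta_H$ be the generators of the complement $P$ inside $G$ and $H$. First I would note that $M_G$ and $M_H$ are characteristic, each being a centralizer of the form $C(\gamma_2/\gamma_4)$ of a section of the lower central series; hence an isomorphism $\Phi \colon G \to H$ restricts to an isomorphism $\Phi|_{M_G} \colon M_G \to M_H$. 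Since the class of $M_G$ and of $M_H$ is at most $(p+1)/2 < p$ by Shepherd, the Lazard correspondence applies and is functorial, so $\Phi|_{M_G}$ translates to a Lie-ring isomorphism $\alpha \colon L(M_G) \to L(M_H)$. This produces an abstract isomorphism and reduces the theorem to making it compatible with multiplication by $\theta$.

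Next I would translate the $\theta$-action into group language. By Lemma \ref{multheta} and the construction of $S_{i,m}$, multiplication by $\theta$ on $L(M_G)$ is the Lie automorphism induced by conjugation with $\theta_G$ on $M_G$, and likewise on the $H$-side with $\theta_H$. As $\Phi$ carries conjugation by $\theta_G$ to conjugation by $\Phi(\theta_G)$, the isomorphism $\alpha$ intertwines multiplication by $\theta$ on $L(M_G)$ with the automorphism $\beta$ of $L(M_H)$ induced by conjugation with $\Phi(\theta_G)$. It therefore suffices to replace $\alpha$ by $\rho \circ \alpha$, where $\rho \in \Aut(L(M_H))$ conjugates $\beta$ back to genuine multiplication by $\theta$. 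Since $\theta_G \notin M_G$ and the two-step centralizer is characteristic, $\Phi(\theta_G)$ lies in $H \setminus M_H$ and has order $p$; writing $\Phi(\theta_G) = w\,\theta_H^{k}$ with $w \in M_H$ and $k \in \{1, \ldots, p-1\}$ shows that $\beta$ differs from multiplication by $\theta$ by the inner automorphism attached to $w$ together with the substitution $\theta \mapsto \theta^{k}$.

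The construction of $\rho$, that is, the removal of these two discrepancies, is the main obstacle. For the inner part I would work inside $\Aut(L(M_H))$: multiplication by $\theta$ and $\beta$ are order-$p$ automorphisms with equal image in $\Out(M_H)$, so making them conjugate reduces to a twisted-conjugacy (coboundary) equation $\theta(v) = v\,w$ for some $v \in M_H$, whose solvability I would extract from the maximal-class structure of $M_H$ and the fixed-point behaviour of the $P$-action off the centre. The genuinely delicate point is the coset power $k$: since multiplication by $\theta$ and by $\theta^{k}$ share the same characteristic polynomial but are in general \emph{not} conjugate by a Lie automorphism of $L(M_H)$, one cannot absorb $k$ inside $\Aut(L(M_H))$. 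Instead I would pre-compose $\Phi$ with an automorphism of $G$, or post-compose with one of $H$, realising $\theta_H^{k} \mapsto \theta_H$ on $H/M_H$; producing such an automorphism from the $\O$-module (Galois) symmetry of the frame construction, and thereby reducing to the case $k = 1$, is the step I expect to demand the most care.
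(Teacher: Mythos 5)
Your first paragraph is, almost word for word, the paper's entire proof: the two-step centralizers are fully invariant, so an isomorphism $\Phi : G \to H$ restricts to $M_G \to M_H$, and since $cl(M_G) \leq (p+1)/2 < p$ the Lazard correspondence converts this restriction into a Lie-ring isomorphism $L(M_G) \to L(M_H)$. Up to that point you and the paper coincide. The difference is in the compatibility clause: the paper disposes of it in a single unargued sentence (``and this isomorphism is compatible with the multiplication by $\theta$''), whereas you correctly observe that this is not automatic, because the restricted isomorphism a priori intertwines multiplication by $\theta$ on $L(M_G)$ only with the automorphism of $L(M_H)$ induced by conjugation with $\Phi(s_G) = w\,\theta_H^{k}$, i.e.\ with multiplication by $\theta^{k}$ composed with an inner twist. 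Identifying these two discrepancies is a genuine contribution beyond what the paper writes down.

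However, you do not remove either discrepancy, so as a proof your text is incomplete exactly where the paper is silent. For the inner part, the solvability of the twisted-conjugacy equation $\theta(v) = v\,w$ is asserted rather than proved, and it is not free: it amounts to conjugacy of complements to $M_H$ in $H$, governed by $H^1(P, \p^i/\p^m)$, which need not vanish (already $H^1(P,\O) \cong \O/\p$ is nontrivial). For the power part, your own diagnosis is right that $\theta \mapsto \theta^{k}$ cannot in general be absorbed into $\Aut(L(M_H))$: the natural fix, post-composing with the Galois map $\sigma_{k^{-1}}$, is only $\theta$-semilinear and carries $L_{i,m}(\gamma')$ to a Lie ring with Galois-twisted coefficients — this is precisely why the Galois automorphism $\sigma$ appears in Theorem \ref{conjZ}. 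So either ``compatible with the multiplication by $\theta$'' is to be read as allowing a Galois-semilinear intertwiner (in which case your second and third paragraphs are attacking a stronger statement than is claimed, and the semilinear version follows from your analysis), or the literal $\theta$-equivariant statement still requires the argument you explicitly defer. Either way, the step you flag as ``demanding the most care'' is left undone, and it is the only step that separates your write-up from a complete proof.
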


\begin{proof}
If $G \cong H$, then $M_G \cong M_H$, since the two-step centralizers
are fully invariant in their respective parent groups by construction. Thus
the Lazard correspondence implies $L(M_G) \cong L(M_H)$ and this
isomorphism is compatible with the multiplication by $\theta$.
\end{proof}

Recall that $\U$ is the unit group of $\O$ and let $\rho_a(u) 
= u^{-1} \sigma_a(u) \sigma_{1-a}(u)$ for $u \in \U$. 

\begin{theorem} 
\label{conjZ}
Let $\gamma = \sum c_a \vartheta_a$ and $\gamma' = \sum c_a' \vartheta_a$
both be in $\hH_i$. If there exist $u \in \U$ and $\sigma \in Gal(K)$ with
\[ \sigma(c_a') \equiv \rho_a(u) c_a \bmod \p^m
   \;\; \mbox{ for } \;\; 2 \leq a \leq (p-1)/2,\]
then $S_{i,m}(\gamma) \cong S_{i,m}(\gamma')$.
\end{theorem}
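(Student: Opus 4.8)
The plan is to build an explicit Lie ring isomorphism $L_{i,m}(\gamma') \to L_{i,m}(\gamma)$ out of the two data $u$ and $\sigma$, and then to transport it to the semidirect products via the Lazard correspondence. Write $\sigma = \sigma_t$ for some $t$ coprime to $p$. I would first record that multiplication by a unit $u \in \U$ preserves every ideal $\p^j$ (since $u\O = \O$), and that each $\sigma \in \Gal(K)$ is a ring automorphism of $\O$ fixing the unique maximal ideal, hence also preserves every $\p^j$. Both maps therefore descend to additive automorphisms of $\p^i/\p^m$, and each $\vartheta_a$ is $P$-linear.

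Next I would compute the effect of the two elementary maps on the basic bilinear forms. A direct expansion gives
\[ \vartheta_a(ux \wedge uy) = \sigma_a(u)\,\sigma_{1-a}(u)\,\vartheta_a(x \wedge y), \]
so that multiplication by $u$, that is $f_u(x) = ux$, is a Lie ring isomorphism $L_{i,m}(\sum_a c_a\rho_a(u)\vartheta_a) \to L_{i,m}(\sum_a c_a\vartheta_a)$ which commutes with multiplication by $\theta$ because $\O$ is commutative. Since $\Gal(K)$ is abelian, $\sigma$ commutes with each $\sigma_a$, whence $\vartheta_a(\sigma x \wedge \sigma y) = \sigma(\vartheta_a(x \wedge y))$; thus $f_\sigma(x) = \sigma(x)$ is a Lie ring isomorphism $L_{i,m}(\sum_a c_a'\vartheta_a) \to L_{i,m}(\sum_a \sigma(c_a')\vartheta_a)$. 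This time $f_\sigma$ does not commute with multiplication by $\theta$; instead $\sigma(\theta x) = \theta^t\sigma(x)$, so $f_\sigma$ intertwines multiplication by $\theta$ on the source with multiplication by $\theta^t$ on the target.

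Now I would combine the two. The congruence $\sigma(c_a') \equiv \rho_a(u)c_a \bmod \p^m$ forces $\sum_a \sigma(c_a')\vartheta_a$ and $\sum_a \rho_a(u)c_a\vartheta_a$ to define the same bracket on $\p^i/\p^m$: the coefficient differences lie in $\p^m$ and $\vartheta_a(\p^i \wedge \p^i) \subseteq \O$, so the two brackets agree modulo $\p^m$. Hence the target of $f_\sigma$ coincides with the source of $f_u$, and the composite $\Phi = f_u \circ f_\sigma$ is a Lie ring isomorphism $L_{i,m}(\gamma') \to L_{i,m}(\gamma)$ satisfying $\Phi(\theta x) = \theta^t\Phi(x)$, since $f_u$ commutes with multiplication by every ring element.

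Finally I would pass to groups. By Lemma \ref{multheta} multiplication by $\theta$, and likewise by $\theta^t$, is a group automorphism of each $\GG(L_{i,m}(\cdot))$, and the functoriality of the Lazard correspondence turns $\Phi$ into a group isomorphism $\phi \colon \GG(L_{i,m}(\gamma')) \to \GG(L_{i,m}(\gamma))$ with $\phi(\theta \cdot n) = \theta^t \cdot \phi(n)$. Since $t$ is coprime to $p$, the assignment $\theta \mapsto \theta^t$ is an automorphism $\alpha$ of $P$, and $(n,q) \mapsto (\phi(n), \alpha(q))$ is then the desired isomorphism $S_{i,m}(\gamma') \to S_{i,m}(\gamma)$. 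I expect the two displayed computations to be routine expansions; the point that genuinely needs care is the Galois twist, namely that $\Phi$ intertwines $\theta$ with $\theta^t$ rather than commuting with $\theta$ outright, and one must check that such a map still extends across the semidirect product — which it does precisely because $\theta \mapsto \theta^t$ is an automorphism of the acting group $P$.
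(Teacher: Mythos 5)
Your proof is correct and follows essentially the same route as the paper: factor the isomorphism into a unit scaling $x \mapsto ux$ and a Galois twist $x \mapsto \sigma(x)$, verify the transformation rules $\vartheta_a(ux \wedge uy) = \sigma_a(u)\sigma_{1-a}(u)\vartheta_a(x \wedge y)$ and $\vartheta_a(\sigma x \wedge \sigma y) = \sigma(\vartheta_a(x \wedge y))$, and transport through the Lazard correspondence. You are in fact more careful than the paper on two points it leaves implicit --- that the congruence modulo $\p^m$ suffices because one only needs the brackets on $\p^i/\p^m$, and that the Galois step intertwines multiplication by $\theta$ with $\theta^t$ so that the map extends to the semidirect products only via the automorphism $\theta \mapsto \theta^t$ of $P$.
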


\begin{proof}
Let $u \in \U$ and consider $\gamma = \sum c_a \vartheta_a$ 
and $\gamma' = \sum c_a' \vartheta_a$ with $c_a' = \rho_a(u) c_a$. Then the
map $\p^i / \p^m \ra \p^i / \p^m : a + \p^m \ms u a + \p^m$ induces an 
isomorphism $L_i(\gamma') \ra L_i(\gamma)$, since 
$\vartheta_a( ux \wedge uy) = \rho_a(u) \vartheta_a( x \wedge y)$ holds.

Similarly, let $\sigma \in Gal(K)$ and  consider the map 
$\p^i / \p^m \ra \p^i / \p^m : a + \p^m \ms \sigma(a) + \p^m$. 
As $\sigma$ is compatible with each $\vartheta_a$, this induces
an isomorphism $L_i(\gamma') \ra L_i(\gamma)$, where $\gamma' =
\sum \sigma(c_a) \vartheta_a$.

This yields the desired result.
\end{proof}

If $M_G$ and $M_H$ have class $2$, then the converse of Theorem
\ref{conjZ} follows via the solution of the isomorphism problem for 
skeleton groups, see \cite{LMc84}. For class at least $3$, the 
converse of Theorem \ref{conjZ} remains open.

\section{Conjectures}

We investigated the frame of $\G(5)$ in \cite{EKS25} and explored the 
frames of $\G(7), \G(11)$ and $\G(13)$ computationally. Based 
on this, we propose the following conjectures.

\begin{conjecture}
Let $p \geq 5$ prime, let $i \in \N_0$ and $\gamma \in \hH_i$. Then
$J_i(\gamma) \neq \{0\}$.
\end{conjecture}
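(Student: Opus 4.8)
The plan is to argue by contradiction and to translate the hypothesis $J_i(\gamma)=\{0\}$ into an impossible Lie algebra over $\Q_p$. Suppose $\gamma \in \hH_i$ satisfies $J_i(\gamma) = \{0\}$. Then the anticommutative bracket $[x,y] := \gamma(x\wedge y)$ satisfies the Jacobi identity for all $x,y,z \in \p^i$. Since $\gamma$ is $\Z_p$-bilinear and $\p^i$ is a full $\Z_p$-lattice in $K$, this bracket extends uniquely to a $\Q_p$-bilinear Lie bracket on $\mathfrak{g} := \p^i \otimes_{\Z_p}\Q_p \cong K$, making $\mathfrak g$ a $\Q_p$-Lie algebra of dimension $p-1$. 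Two structural features are crucial. First, because $\gamma$ maps \emph{onto} $\p^{2i+1}$, which is a lattice of full rank, we get $[\mathfrak g, \mathfrak g] = \mathfrak g$, so $\mathfrak g$ is \emph{perfect}. Second, the $P$-equivariance of $\gamma$ says precisely that multiplication by $\theta$ is a Lie algebra automorphism of $\mathfrak g$; it has order $p$ and characteristic polynomial $\Phi_p$, so it has no nonzero fixed vector.

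Next I would extend scalars to $\ol\Q_p$ and diagonalize. Over $\ol\Q_p$ the automorphism $\theta$ has the simple eigenvalues $\theta,\theta^2,\dots,\theta^{p-1}$, none equal to $1$, so the eigenspace decomposition is a $\Z/p$-grading
\[ \mathfrak g \otimes \ol\Q_p = \bigoplus_{j \in \Z/p} \mathfrak g_j, \qquad [\mathfrak g_j, \mathfrak g_k] \subseteq \mathfrak g_{j+k}, \qquad \mathfrak g_0 = \{0\}, \]
with each $\mathfrak g_j$ for $j \neq 0$ one-dimensional. Both perfectness and the vanishing of $\mathfrak g_0$ pass to $\mathfrak g \otimes \ol\Q_p$.

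The heart of the argument is then purely Lie-theoretic. The solvable radical $\mathfrak r$ of $\mathfrak g \otimes \ol\Q_p$ is characteristic, hence $\theta$-stable and therefore graded, so the semisimple quotient $\mathfrak s = (\mathfrak g\otimes\ol\Q_p)/\mathfrak r$ inherits a $\Z/p$-grading with $\mathfrak s_0 = \{0\}$. I claim that for $p \geq 5$ no nonzero semisimple Lie algebra over $\ol\Q_p$ admits such a grading. The grading corresponds to a semisimple automorphism $\sigma$ of $\mathfrak s$ of order dividing $p$ with $\mathfrak s^\sigma = \mathfrak s_0 = \{0\}$. Its image in $\Out(\mathfrak s)$ has order dividing $p$; for $p \geq 5$ this image is trivial, since the diagram-automorphism groups of the simple factors have order in $\{1,2,6\}$ and an order-$p$ cyclic permutation of isomorphic simple factors would require at least $3p > p-1$ dimensions. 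Hence $\sigma$ is inner, and a finite-order inner automorphism in characteristic $0$ is semisimple and fixes a Cartan subalgebra pointwise; thus $\mathfrak s^\sigma$ contains a Cartan subalgebra and is nonzero, forcing $\mathfrak s = \{0\}$. Therefore $\mathfrak g \otimes \ol\Q_p$ is solvable, so $\mathfrak g$ is solvable. A nonzero solvable Lie algebra is never perfect, contradicting $[\mathfrak g,\mathfrak g]=\mathfrak g$. This contradiction yields $J_i(\gamma)\neq\{0\}$.

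The step I would scrutinize most carefully, and which I expect to be the main obstacle, is the Lie-theoretic claim that a nonzero semisimple Lie algebra over $\ol\Q_p$ has no $\Z/p$-grading with trivial zero component: one must confirm that every finite-order automorphism whose order is coprime to $|\Out|$ is inner and fixes a maximal torus, and verify the factor-permutation count for all $p \geq 5$. I would also check that the Levi--Mal'cev decomposition, the characteristic nature of the radical, and the conjugacy of finite-order semisimple elements into a maximal torus hold over the algebraically closed field $\ol\Q_p$ of characteristic zero exactly as over $\mathbb C$; these are standard but worth stating precisely. If all of this goes through, the conjecture follows for every $i$ and every $\gamma \in \hH_i$, with no appeal to the class bounds of Theorems \ref{lcs} and \ref{classbound}.
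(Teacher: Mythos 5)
First, a point of order: the paper does not prove this statement. It appears in Section 6 explicitly as a conjecture, supported only by the computational exploration of $\G(5)$, $\G(7)$, $\G(11)$ and $\G(13)$, so there is no argument of the authors against which to compare yours. Your proposal must therefore be judged on its own.

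Having done so, I cannot find a gap, and you should recognize what your argument really is: the Levi decomposition, the analysis of $\Out$ of the simple factors, and the fact that a finite-order inner automorphism fixes a Cartan subalgebra together reprove a special case of Kreknin's theorem, which states that a Lie ring with a $\Z/n$-grading having trivial zero component (equivalently, in characteristic zero after adjoining the $n$-th roots of unity, a fixed-point-free automorphism of finite order $n$) is solvable. Citing Kreknin directly would shorten the proof and dispose of the $\Out$ case distinction, which is in any event harmless since $\gcd(p,|\Out|)=1$ for every simple type when $p\geq 5$. The two inputs specific to this paper both check out: surjectivity of $\gamma$ onto $\p^{2i+1}$, a full-rank lattice in $K$, does make $\mathfrak{g}=\p^i\otimes_{\Z_p}\Q_p$ perfect (note that the integral Lie ring $\p^i$ itself is \emph{not} perfect, since $[\p^i,\p^i]=\p^{2i+1}$ has index $p^{i+1}$; passing to the rationalization, as you do, is essential); and $P$-equivariance of $\gamma$ under the diagonal action makes multiplication by $\theta$ a Lie automorphism whose characteristic polynomial is the cyclotomic polynomial $\Phi_p$, hence without nonzero fixed vectors. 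Perfect, solvable and nonzero is then a contradiction. The one caution I would press on you: the authors work in precisely the community for which fixed-point-free automorphisms of prime order (Higman--Kreknin--Kostrikin) are the standard tool for groups of maximal class, so their choosing to state this as open is a strong signal that either a soft argument was overlooked or some definitional subtlety (the exact meaning of $\Hom_P$, or of surjectivity in the definition of $\hH_i$) defeats your reduction. Re-derive $[\theta x,\theta y]=\theta[x,y]$ from the stated conventions and re-verify perfectness of the rationalized algebra; if both survive scrutiny, you have not merely proposed a proof of the conjecture but established it, and you should say so prominently rather than burying it as an exercise.
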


This conjecture implies that $L_i(\gamma)$ is always finite, and hence
nilpotent by Theorem \ref{lcs}. Further, Theorem \ref{classbound} yields
that it has class at most $p-1$ if $i > p-1$. 

\begin{conjecture} 
\label{conjX}
Let $p \geq 5$ prime and $i > p+1$. Then the leaves of the frame $\FF_i$ 
are terminal groups in $\B_i$.
\end{conjecture}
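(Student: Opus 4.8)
The plan is to prove the contrapositive at the level of the frame: I will show that a frame group built from a \emph{maximal} Lie ring has a child inside $\FF_i$ as soon as it has any child in $\B_i$, so a leaf of $\FF_i$ can have no child in $\B_i$ at all. First I would pin down the shape of a frame leaf. A frame group of $\B_i$ has the form $S_{i-2,m}(\gamma)$ with $\gamma\in\hH_{i-2}$ and $2i-3<m\le\ll$, where $J_{i-2}(\gamma)=\p^\ll$. Since $i>p+1$ gives $i-2>p-1$, Theorem \ref{main0}(b) shows that the maximal Lie ring $L_{i-2}(\gamma)=\p^{i-2}/\p^\ll$ has class at most $p-1$, hence so does every quotient $L_{i-2,m}(\gamma)$; the Lazard correspondence therefore applies along the whole chain and the construction is never cut short by the class bound. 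Thus for fixed $\gamma$ the chain stops only at $m=\ll$, and a leaf must have the form $G=S_{i-2,\ll}(\gamma)$ with $J_{i-2}(\gamma)=\p^\ll$ finite and two-step centralizer $M_G$ carrying the maximal Lie ring $L(M_G)=L_{i-2}(\gamma)$. (If $J_{i-2}(\gamma)=\{0\}$ the chain is infinite and no leaf occurs.)

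Now suppose such a $G=S_{i-2,\ll}(\gamma)$ is not terminal in $\B_i$ and let $H$ be a child, so $H/Z(H)\cong G$ and $|H|=p\,|G|$. Since $\ll>2i-3$ by Theorem \ref{lcs}(a) (as $i>p$), $G$ lies off the mainline, so all its descendants, in particular $H$, lie in $\B_i$, and $i>p+1$ lets me run the analysis of Section \ref{frameproof} for $H$. The two-step centralizer is preserved under the central quotient (\cite[Chap.\ 3]{LGM02}), so $M_H/Z(H)\cong M_G$ and, by the Lazard correspondence, $L(M_H)$ is a central extension of $L(M_G)=L_{i-2}(\gamma)$ by the rank-one image of $Z(H)$. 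Following Lemma \ref{addi}, the additive group of $L(M_H)$ is almost homocyclic of rank $p-1$ and is identified with $\p^{i-2}/\p^{\ll+1}$ so that conjugation by $s\in H\setminus M_H$ becomes multiplication by $\theta$; under this identification the image of $Z(H)$ is the top slice $\p^\ll/\p^{\ll+1}$ and $L(M_H)$ sits at level $\ll+1$.

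The next step is to realize the bracket of $L(M_H)$ by a genuine frame datum. Repeating the argument of Lemma \ref{multi} with $H$ in place of $G$, this bracket is a $P$-equivariant antisymmetric surjection $\p^{i-2}/\p^{\ll+1}\wedge\p^{i-2}/\p^{\ll+1}\to\p^{2i-3}/\p^{\ll+1}$, which lifts through the index-$p$ map $\phi$ and the splitting by the $\vartheta_a$ of Theorem \ref{span} to a global $\gamma_H\in\hH_{i-2}$ with $L_{i-2,\ll+1}(\gamma_H)\cong L(M_H)$. As $L(M_H)$ is an honest Lie ring, its bracket satisfies the Jacobi identity modulo $\p^{\ll+1}$, i.e. $J_{i-2}(\gamma_H)\subseteq\p^{\ll+1}$; hence $S_{i-2,\ll+1}(\gamma_H)$ is a bona fide frame group of $\B_i$ by Theorem \ref{grpconst} and Theorem \ref{main0}(b). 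Reducing modulo the top slice kills the image of $Z(H)$ and yields $L_{i-2,\ll}(\gamma_H)\cong L(M_H)/L(\ol{Z(H)})\cong L(M_G)=L_{i-2,\ll}(\gamma)$, an isomorphism compatible with multiplication by $\theta$; so $S_{i-2,\ll+1}(\gamma_H)/Z\cong S_{i-2,\ll}(\gamma_H)\cong S_{i-2,\ll}(\gamma)=G$ by Theorem \ref{conjZ} (cf. Theorem \ref{main2}). Thus $S_{i-2,\ll+1}(\gamma_H)$ is a child of $G$ lying in $\FF_i$, contradicting that $G$ is a leaf; therefore $G$ is terminal in $\B_i$.

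I expect the crux to be the lifting in the third paragraph: that the bracket of the abstract central extension $L(M_H)$ is induced by a \emph{global}, $P$-equivariant, surjective $\gamma_H\in\hH_{i-2}$, rather than by an arbitrary antisymmetric cocycle on $\p^{i-2}/\p^{\ll+1}$. This is exactly the content of the maps $\psi,\phi$ and the index-$p$ image statement used in Lemma \ref{multi}, now applied one level higher, and I would check that enlarging the target from $\p^{2i-3}/\p^{m}$ to $\p^{2i-3}/\p^{\ll+1}$ leaves the surjectivity and the supplement by $N$ intact. A secondary point to verify is that the identification $L(M_H)/L(\ol{Z(H)})\cong L(M_G)$ is $\theta$-equivariant, since only then may Theorem \ref{conjZ} be invoked to pass from the Lie-ring isomorphism back to $S_{i-2,\ll}(\gamma_H)\cong G$.
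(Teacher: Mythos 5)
This statement is labelled a \emph{conjecture} in the paper: the authors offer no proof, only computational evidence for $p\in\{5,7,11,13\}$, so any complete argument would be a genuinely new result. Your proposal contains a real gap, and it sits exactly where you yourself flag the crux. In the third paragraph you assert that the bracket of $L(M_H)$ ``lifts through the index-$p$ map $\phi$ \ldots to a global $\gamma_H\in\hH_{i-2}$ with $L_{i-2,\ll+1}(\gamma_H)\cong L(M_H)$.'' But the content of \cite[Theorem 8.3.7]{LGM02}, as used in Lemma \ref{multi}, is that the image $I$ of $\phi$ has index $p$ in $\Hom_P(\p^{i-2}\wedge\p^{i-2},\p^{2i-3}/\p^{\ll+1})$ and is supplemented by $N=\Hom_P(\p^{i-2}\wedge\p^{i-2},\p^{\ll}/\p^{\ll+1})$, i.e.\ by homomorphisms into precisely the top slice that is the image of $Z(H)$. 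So the bracket of $L(M_H)$ decomposes as $\gamma_I+\gamma_N$, and only $\gamma_I$ is guaranteed to come from a global homomorphism; the obstruction $\gamma_N$ need not vanish. This is exactly why Lemma \ref{multi} concludes only that $L(M/Z(G))$ --- the quotient by the top slice, where $\gamma_N$ dies --- is of the form $L_{i,m-1}(\ol\gamma)$, and why Theorem \ref{main1} asserts only that $H/Z(H)$ is a frame group, not $H$. Applied one level higher as you propose, the same argument yields only that $L(M_H)/L(\ol{Z(H)})\cong L(M_G)$ is globally realizable, which you already knew; it does not produce a frame group covering $G$. Whether the obstruction class in $H/I\cong C_p$ always vanishes for children of frame leaves is essentially the open content of the conjecture.

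A secondary issue: your characterization of the leaves of $\FF_i$ assumes via Theorem \ref{main0}(b) that the chain $m\mapsto S_{i-2,m}(\gamma)$ is never cut short by the class-$(p-1)$ bound, but part (b) of that theorem has the hypothesis $J_{i-2}(\gamma)\neq\{0\}$, which is itself the paper's first (open) conjecture; and your parenthetical claim that $J_{i-2}(\gamma)=\{0\}$ forces an infinite chain ignores that the Lazard correspondence, and hence the definition of $S_{i-2,m}(\gamma)$, fails once the class of $L_{i-2,m}(\gamma)$ exceeds $p-1$, so a leaf can in principle arise that way too. The final appeal to Theorem \ref{conjZ} is also not quite the right tool (that theorem needs the explicit coefficient relation $\sigma(c_a')\equiv\rho_a(u)c_a$), though this is harmless since a $\theta$-compatible Lie ring isomorphism does induce an isomorphism of the semidirect products directly via Lazard.
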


Natural questions arise. How does the sequence of frames $\FF_i,
\FF_{i+1}, \ldots$ grow with $i$? What is the structure of the 
branches $\B_i$ outside the frames $\FF_i$? We define 
the {\em twig} $\RR(G)$ for a group $G$ in $\FF_i$ as the subtree of 
$\B_i$ consisting of all descendants of $G$ that are not in the frame 
$\FF_i$. By construction, $\RR(G)$ is the tree with root $G$ and Theorem 
\ref{main1} asserts that it has depth at most $1$. The following is a 
variation of Conjecture W as proposed by 
Eick, Leedham-Green, Newman \& O'Brien \cite{ELNO13}.

\begin{conjecture}
\label{periodII}
Let $p \geq 5$ prime. Then there exists $e = e(p)$ and $f = f(p)$ with
$(p-1) \mid f$ so that for each $i \geq e$ and each $\gamma \in \hH_i$
\[ \RR( S_{i+f, m}(\gamma) ) \cong \RR(S_{i,m}(\gamma)). \]
\end{conjecture}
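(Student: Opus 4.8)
The plan is to reduce the conjecture to a periodicity statement about a cohomological parameter space, together with the group action that governs isomorphism of the resulting groups, and then to exploit the stability provided by Lemma \ref{multp}. First I would make the twig explicit. By Theorem \ref{main1} the rooted tree $\RR(G)$ attached to a frame group $G = S_{i,m}(\gamma)$ has depth at most $1$, so as a rooted tree it is determined by the multiset of isomorphism types of those groups $H$ in $\B_i$ with $H/Z(H) \cong G$ that are not themselves frame groups. Each such $H$ is a central extension of $G$ by a group of order $p$ that again has maximal class, so it is described by a class in a $P$-equivariant second cohomology group of $G$; the maximal-class requirement together with the condition "not in the frame" cut out a distinguished subset of these classes, and the twig is the image of this subset in the set of isomorphism types.

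Second, I would translate this data into the Lie-ring language of Section \ref{frameproof}. The two-step centralizer $M_H$ of such an $H$ has class at most $(p+1)/2 < p$ by Shepherd, so the Lazard correspondence applies to it, and the splitting used in the proof of Lemma \ref{multi}---the index-$p$ homomorphism $\phi$ together with its supplement $N = \Hom_P(\p^i \wedge \p^i, \p^{m-1}/\p^m)$---isolates the extra parameter distinguishing a non-frame descendant from the frame group sitting below it. Concretely, the twig should be parametrized by this $N$-component, modulo the action of the units $\U$ and of $\Gal(K)$ that governs isomorphism in the sense of Theorem \ref{conjZ}; the isomorphism type of $\RR(G)$ is then the orbit structure of this action.

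Third, I would install the periodicity. Since $(p-1) \mid f$, Lemma \ref{multp} shows that the same $\gamma$ lies in $\hH_{i+f}$, so $S_{i+f,m}(\gamma)$ is built from the same homomorphism, and multiplication by $p^{f/(p-1)}$ furnishes a $P$-compatible isomorphism $\p^i \to \p^{i+f}$ and hence isomorphisms between the $\Hom_P$-modules (in particular the spaces $N$) at index $i$ and at index $i+f$. Choosing $e$ large enough that the relevant Lie rings have stabilized to class $3$ (Theorem \ref{main0}(c)) and that Theorem \ref{main1} applies, the local structure near corresponding frame groups no longer varies with $i$, so these identifications should match the defining data of the two parameter spaces exactly. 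It then remains to check that the shift is equivariant for the $\U$- and $\Gal(K)$-actions of Theorem \ref{conjZ}, so that it carries orbits to orbits and hence rooted trees to rooted trees.

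The hard part is this final equivariance step, and beneath it lies a genuine gap. To pin down $\RR(G)$ up to isomorphism---rather than merely to match the underlying modules---one needs the \emph{converse} of Theorem \ref{conjZ}, that is, a complete solution of the isomorphism problem for frame groups. This is available in the class-$2$ (skeleton) case through \cite{LMc84}, but for class at least $3$ it is explicitly left open after Theorem \ref{conjZ}. Thus the principal obstacle is not the periodicity of the modules, which follows cleanly from Lemma \ref{multp} and scaling by $p$, but the assertion that the parameter-to-group map has identical fibres at indices $i$ and $i+f$; resolving this appears to require first settling the class-$\geq 3$ isomorphism classification.
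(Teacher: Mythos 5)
The statement you are trying to prove is labelled as a \emph{conjecture} in the paper (Conjecture \ref{periodII}); the paper offers no proof of it, so there is nothing to compare your attempt against. What you have written is a strategy sketch rather than a proof, and you correctly diagnose why it cannot currently be completed: the reduction of ``isomorphism of twigs'' to ``matching orbits of the $\U$- and $\Gal(K)$-actions on a parameter space'' requires the \emph{converse} of Theorem \ref{conjZ}, i.e.\ a solution of the isomorphism problem for frame groups of class at least $3$, which the paper explicitly leaves open at the end of Section \ref{frameisom}. So the proposal does not establish the statement, and no completion of it is available from the results in the paper.

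Two further points you should be aware of. First, the conjecture concerns the groups $S_{i+f,m}(\gamma)$ with the \emph{same} $m$; since $|S_{i,m}(\gamma)| = p^{m-i+1}$, shifting $i$ to $i+f$ while holding $m$ fixed changes the order of the group, and your proposed identification via multiplication by $p^{f/(p-1)}$ sends $\p^i/\p^m$ to $\p^{i+f}/\p^{m+f}$, not to $\p^{i+f}/\p^m$; any serious attempt must first pin down how $m$ is meant to transform. Second, your description of the twig as a ``multiset of isomorphism types'' of the non-frame children is finer than what the conjecture asserts: $\RR(G)$ is a rooted tree of depth at most $1$ by Theorem \ref{main1}, so an isomorphism of twigs as rooted trees amounts to equality of the number of non-frame immediate descendants. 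This makes the target statement weaker than the full parameter-space periodicity you set up, but even the counting version still hinges on controlling which central extensions of $S_{i,m}(\gamma)$ are pairwise isomorphic and which land back in the frame, and that again runs into the unresolved isomorphism problem. In short: your reduction is a sensible programme, consistent with how such periodicity statements (e.g.\ Conjecture W of \cite{ELNO13}) are usually attacked, but the statement remains a conjecture and your write-up should present it as such.
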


\bibliographystyle{abbrv}

\def\cprime{$'$} \def\cprime{$'$}

\end{document}